\newcommand{\R}{\mathbf{R}}
\newcommand {\E}{\mathrm{E}}
\renewcommand{\d}{\text{\rm d}}
\newcommand{\sL}{\mathcal{L}}
\newcommand{\sG}{\mathcal{G}}
\newcommand{\sA}{\mathcal{A}}
\newcommand{\sE}{\mathcal{E}}
\newcommand{\sI}{\mathcal{I}}
\newcommand{\sS}{\mathcal{S}}
\newcommand{\ve}{\epsilon}
\newtheorem{stat}{Statement}[section]
\newtheorem{proposition}[stat]{Proposition}
\newtheorem{corollary}[stat]{Corollary}
\newtheorem{theorem}[stat]{Theorem}
\newtheorem{lemma}[stat]{Lemma}
\theoremstyle{definition}
\newtheorem{definition}[stat]{Definition}\newtheorem{remark}[stat]{Remark}
\newtheorem{example}[stat]{Example}
\numberwithin{equation}{section}
\begin{document}

\title{\bf On some properties of a class of fractional stochastic heat equations.%
	\thanks{%
	Research supported in part by EPSRC.}}
	
\author{Mohammud Foondun\\Loughborough University
\and Kuanhou Tian\\ Loughborough University \and Wei Liu \\Loughborough University}

\date{April 27, 2014}
\maketitle
\begin{abstract}
	We consider nonlinear parabolic
	stochastic equations of the form
	$\partial_t u=\sL u + \lambda \sigma(u)\dot \xi$ on the ball $B(0,\,R)$, where
	$\dot \xi$ denotes some Gaussian noise and
	$\sigma$ is Lipschitz continuous. Here $\sL$ corresponds to an $\alpha$-stable process killed upon exiting $B(0, R)$. We will consider two types of noise; space-time white noise and spatially correlated noise.  Under a linear growth condition on $\sigma$, we study growth properties of the second moment of the solutions. Our results are significant extensions of  those in \cite{foonJose} and complement those of \cite{Khoshnevisan:2013ab} and \cite{Khoshnevisan:2013aa}.

\vskip .2cm \noindent{\it Keywords:}
		Stochastic partial differential equations, \\
		
	\noindent{\it \noindent AMS 2000 subject classification:}
		Primary: 60H15; Secondary: 82B44.\\
		
	\noindent{\it Running Title:} Noise excitability and
		parabolic SPDEs.\newpage
\end{abstract}

\section{Introduction and main results.}

Consider the following stochastic heat equation on the interval $(0,1)$ with Dirichlet boundary condition:
\begin{equation*}
\left|\begin{split}
&\partial_t u_t(x)=\frac{1}{2}\partial _{xx}u_t(x)+\lambda u_t(x)\dot{w}(t,\,x)\quad\text{for}\quad0<x<1\quad\text{and}\quad t>0\\
&u_t(0)=u_t(1)=0 \quad \text{for}\quad t>0.
\end{split}
\right.
\end{equation*}
Here $\dot{w}$ denotes white noise, $\lambda$ is a positive parameter and $u_0(x)$ is the initial condition. Set 
\begin{equation*}
\sE_t(\lambda):=\sqrt{\int_{0}^1\E|u_t(x)|^2 \d x}.
\end{equation*}
The study of $\sE_t(\lambda)$ as $\lambda$ gets large was initiated in  \cite{Khoshnevisan:2013aa} and \cite{Khoshnevisan:2013ab}. In \cite{foonJose}, it was shown that $\sE_t(\lambda)$ grows like $\text{const}\times\exp(\lambda^4)$ as $\lambda$ gets large.  The main aim of this paper is to extend similar results to a much wider class of stochastic equations. Existence and uniqueness of solutions to these equations are known, so we refer the reader to \cite{walsh} and \cite{minicourse} for technical details about this issue.
We will first look at equations driven by white noise. Fix $R>0$ and consider the following:

\begin{equation}\label{eq:dirichlet}
\left|\begin{split}
&\partial_t u_t(x)=\sL u_t(x)+\lambda \sigma(u_t(x))\dot{w}(t,\,x),\\
&u_t(x)=0, \quad \text{for all}\quad x\in B(0,\,R)^c,
\end{split}
\right.
\end{equation}
where $\dot{w}$ denotes white noise $(0,\infty)\times B(0,\,R)$.  Here and throughout this paper, the initial function $u_0:B(0,\,R)\rightarrow \R_+$ is a nonrandom nonnegative function which is strictly positive in the set of positive measure in $B(0\,,R)$. $\sigma:\R^d \rightarrow \R$ is a continuous function with $\sigma(0)=0$ and
\begin{equation*}
l_\sigma:=\inf_{x\in \R^d\backslash \{0\}}\left| \frac{\sigma(x)}{x}\right|\quad \text{and}\quad
L_\sigma:=\sup_{x\in \R^d\backslash \{0\}}\left| \frac{\sigma(x)}{x}\right|,
\end{equation*}
where $0<l_\sigma\leq L_\sigma<\infty$.  $\sL$ is the generator of an $\alpha$-stable process killed upon exiting $B(0,\,R)$ so that \eqref{eq:dirichlet} can be thought of as the Dirichlet problem for fractional Laplacian of order $\alpha$.

Following Walsh \cite{walsh}, we say that $u$ is a {\it mild solution} to \eqref{eq:dirichlet} if it satisfies the following evolution equation,
\begin{equation}\label{mild:dirichlet}
u_t(x)=
(\sG_Du)_t(x)+ \lambda \int_{B(0,\,R)}\int_0^t p_D(t-s,\,x,\,y)\sigma(u_s(y))w(\d s\,\d y),
\end{equation}
where
\begin{equation*}
(\sG_D u)_t(x):=\int_{B(0,\,R)} u_0(y)p_D(t,\,x,\,y)\,\d y.
\end{equation*}
Here $p_D(t,\,x,\,y)$ denotes the fractional Dirichlet heat kernel. It is also well known that this unique mild solution satisfies the following integrality condition 
\begin{equation}\label{moments}
\sup_{x\in B(0,\,R)}\sup_{t\in[0,\,T]} \E|u_t(x)|^k<\infty \quad\text{for all}\quad T>0 \quad\text{and}\quad k\in[2,\,\infty],
\end{equation}
which imposes the restriction that $d=1$, which will be in force whenever we deal with \eqref{eq:dirichlet}.

Here is our first result.

\begin{theorem}\label{white}
Fix $\ve>0$ and let $x\in B(0,\,R-\ve)$, then for any $t>0$,
\begin{equation*}
\lim_{\lambda\rightarrow \infty}\frac{\log \log \E|u_t(x)|^2}{\log \lambda}=\frac{2\alpha}{\alpha-1},
\end{equation*}
where $u_t$ is the unique solution to \eqref{eq:dirichlet}.
\end{theorem}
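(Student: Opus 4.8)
The plan is to analyze the mild solution \eqref{mild:dirichlet} via its second moment and extract the precise growth rate $2\alpha/(\alpha-1)$ in the exponent by sandwiching $\E|u_t(x)|^2$ between two exponential-in-$\lambda$ quantities governed by the heat kernel. Squaring \eqref{mild:dirichlet} and applying Walsh's isometry for the stochastic integral, together with the bounds $l_\sigma^2 z^2 \le \sigma(z)^2 \le L_\sigma^2 z^2$ coming from the definition of $l_\sigma, L_\sigma$, I would obtain the renewal-type inequality
\begin{equation*}
\E|u_t(x)|^2 = \bigl|(\sG_D u)_t(x)\bigr|^2 + \lambda^2 \int_0^t\!\!\int_{B(0,R)} p_D(t-s,x,y)^2\, \E|\sigma(u_s(y))|^2\, \d y\, \d s,
\end{equation*}
whose upper and lower versions replace $\E|\sigma(u_s(y))|^2$ by $L_\sigma^2\,\E|u_s(y)|^2$ and $l_\sigma^2\,\E|u_s(y)|^2$ respectively. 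This is a linear Volterra integral inequality with kernel $\lambda^2 p_D(t-s,x,y)^2$, and its growth in $\lambda$ is dictated by the large-time/large-$\lambda$ behavior of the associated resolvent.

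The key analytic input is a two-sided estimate on the Dirichlet heat kernel $p_D(t,x,y)$ for the fractional Laplacian on $B(0,R)$. For $x$ in the strictly interior ball $B(0,R-\ve)$ and away from the boundary, on the relevant short time scales $p_D$ behaves like the free $\alpha$-stable kernel $p(t,x,y)$, which satisfies the scaling $p(t,x,y)\asymp t^{-1/\alpha}$ near the diagonal and more precisely $p(t,x,y)\asymp \min(t^{-1/\alpha}, t/|x-y|^{1+\alpha})$. The crucial quantity is the on-diagonal time integral $\int_0^\infty e^{-\beta t} p(2t,x,x)\,\d t$, which for the $\alpha$-stable kernel scales like $\int_0^\infty e^{-\beta t} t^{-1/\alpha}\,\d t \asymp \beta^{-(1-1/\alpha)}=\beta^{-(\alpha-1)/\alpha}$. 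Taking Laplace transforms in $t$ of the integral equation converts the convolution kernel $\lambda^2 p_D(\cdot)^2$ into multiplication, and the exponential growth rate of $\E|u_t(x)|^2$ is governed by the abscissa $\beta=\beta(\lambda)$ at which the transformed kernel equals $1$; that is, $\beta(\lambda)$ solves $\lambda^2\,\beta^{-(\alpha-1)/\alpha}\asymp 1$, giving $\beta(\lambda)\asymp \lambda^{2\alpha/(\alpha-1)}$. Hence $\E|u_t(x)|^2 \asymp \exp\bigl(\text{const}\cdot\lambda^{2\alpha/(\alpha-1)}\,t\bigr)$, and taking $\log\log$ and dividing by $\log\lambda$ yields the claimed limit.

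Concretely I would carry out the argument in four steps. First, establish the upper bound by iterating the Volterra inequality (Picard/Gronwall-type expansion) to write $\E|u_t(x)|^2 \le \sum_{n\ge 0} (\lambda^2 L_\sigma^2)^n I_n(t,x)$, where $I_n$ is the $n$-fold time-convolution of $p_D(\cdot)^2$ against the initial-data term, and control the series by its Laplace transform using the $\beta^{-(\alpha-1)/\alpha}$ kernel estimate. Second, obtain the matching lower bound: since $u_0$ is strictly positive on a set of positive measure, $(\sG_D u)_t(x)$ is strictly positive for interior $x$, so the same expansion with $l_\sigma$ in place of $L_\sigma$ gives a lower bound of the same exponential order; here I must be careful to retain only the on-diagonal contribution, which suffices for the lower bound. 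Third, verify that restricting to $x\in B(0,R-\ve)$ guarantees the Dirichlet kernel is comparable to the free kernel on the time scales that dominate as $\lambda\to\infty$, so the boundary killing does not affect the leading exponent. Fourth, combine the two bounds and pass to the $\log\log/\log\lambda$ limit.

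The main obstacle I anticipate is making the Laplace-transform heuristic rigorous, i.e.\ showing that the iterated convolution series is genuinely controlled two-sidedly by $\exp(\text{const}\cdot\lambda^{2\alpha/(\alpha-1)}t)$ with the \emph{same} exponent up to the $\log\log$ scale. The upper bound via the subadditivity/renewal structure is relatively standard, but the lower bound is more delicate: one must show the near-diagonal $p_D(t,x,y)^2$ mass is large enough, uniformly for interior $x$, and that the positivity of the initial data propagates so that each term in the Picard expansion contributes positively. Controlling the precise power $\beta^{-(\alpha-1)/\alpha}$ of the Laplace transform of $p_D(\cdot)^2$ — rather than of $p_D$ itself — is the technical heart, since squaring the kernel changes the effective dimension of the singularity and is exactly what produces the exponent $2\alpha/(\alpha-1)$ instead of $\alpha/(\alpha-1)$.
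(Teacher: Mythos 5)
Your proposal follows essentially the same route as the paper's proof of Theorem \ref{white}: Walsh's isometry turns the mild formulation \eqref{mild:dirichlet} into the two-sided Volterra inequality with kernel $\lambda^2 p_D^2(t-s,x,y)$; the identity $\int_{B(0,R)}p_D^2(t-s,x,y)\,\d y=p_D(2(t-s),x,x)\le c(t-s)^{-1/\alpha}$ gives the upper renewal inequality; and the interior comparison $p_D\ge c\,p$ (the paper's Proposition \ref{lower}) together with the strict positivity of $(\sG_Du)_t(x)$ (Remark \ref{deterministic}) gives the matching lower one. The only difference in the core is packaging: you extract the growth rate by locating the Laplace-transform abscissa $\beta(\lambda)\asymp\lambda^{2\alpha/(\alpha-1)}$ of the resolvent, whereas the paper iterates the inequality into a Mittag--Leffler series $E_{(\alpha-1)/\alpha}\bigl(c\lambda^2 t^{(\alpha-1)/\alpha}\bigr)$ and invokes its asymptotics (Propositions \ref{prop:upper-renew}, \ref{prop:lower-renew} and \ref{boundMLf}). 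Since the Mittag--Leffler asymptotics are themselves obtained by Laplace methods, these are the same computation, and your version of the upper bound (iterate, transform, sum the geometric series) goes through cleanly.

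The one genuine gap is your step three. The comparison $p_D\ge c\,p$ holds only for $t\le t_0=\ve^\alpha$, so the lower renewal inequality you want to iterate is simply not available for a fixed $t>t_0$: the convolution $\int_0^t(t-s)^{-1/\alpha}\,\sI_{\ve,s}(\lambda)\,\d s$ involves increments $t-s$ up to $t$, where the kernel lower bound fails. Your remedy, ``only short time scales dominate as $\lambda\to\infty$,'' is a correct heuristic about the solution of the renewal inequality, but as stated it is not an argument. It can be made rigorous by restricting every time increment in the Picard expansion to length at most $t/k$ and keeping only near-diagonal, interior spatial chains --- this is exactly the localization the paper carries out for the coloured-noise case (Proposition \ref{prop:recur}), and after discarding the finitely many terms with $t/k>t_0$ the surviving series still forces the liminf. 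The paper's own fix for the white-noise theorem is different and simpler: a restart argument, writing the mild equation for $u_{T+t}$ with $t\le t_0$, dropping the contribution of $[0,T]$, noting that $|(\sG_Du)_{T+t}(x)|^2$ is still strictly positive, and reusing the small-time proposition verbatim. You need one of these two arguments; without it your proof establishes the theorem only for $t\le\ve^\alpha$.
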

Set
\begin{equation}\label{energy}
\sE_t(\lambda):=\sqrt{\int_{B(0,\,R)}\E|u_t(x)|^2 \d x}.
\end{equation}
We have the following definition. 
\begin{definition}
The {\it excitation index} of $u$ at time $t$ is given by 
\begin{equation*}
e(t):=\lim_{\lambda\rightarrow \infty}\frac{\log \log \sE_t(\lambda)}{\log \lambda}
\end{equation*}
\end{definition}
We then have the following corollary.
\begin{corollary}\label{cor:white}
The excitation index of the solution to \eqref{eq:dirichlet} is $\frac{2\alpha}{\alpha-1}$.
\end{corollary}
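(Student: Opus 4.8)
The plan is to deduce the corollary from Theorem~\ref{white} by integrating the pointwise second-moment asymptotics over the ball, so that the only real work is to upgrade the pointwise statement into matching upper and lower bounds on the energy $\sE_t(\lambda)^2=\int_{B(0,\,R)}\E|u_t(x)|^2\,\d x$. First I would record a harmless reduction: since $\sE_t(\lambda)=(\sE_t(\lambda)^2)^{1/2}$, for all $\lambda$ large enough that $\sE_t(\lambda)>1$ we have $\log\log\sE_t(\lambda)=\log\log\sE_t(\lambda)^2-\log 2$, and after dividing by $\log\lambda$ the additive constant is negligible. Hence
\begin{equation*}
e(t)=\lim_{\lambda\to\infty}\frac{\log\log\sE_t(\lambda)^2}{\log\lambda}
\end{equation*}
as soon as the right-hand limit exists, and I will show it exists and equals $\frac{2\alpha}{\alpha-1}$ by sandwiching $\sE_t(\lambda)^2$ between two functions of the same exponential order.

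For the upper bound I would use the fact that the estimates underlying the upper half of Theorem~\ref{white} are uniform in the spatial variable: they produce constants $C_1,c_1>0$ (depending on $t$) with
\begin{equation*}
\sup_{x\in B(0,\,R)}\E|u_t(x)|^2\leq C_1\exp\!\big(c_1\lambda^{\frac{2\alpha}{\alpha-1}}\big).
\end{equation*}
Integrating over $B(0,\,R)$ gives $\sE_t(\lambda)^2\leq |B(0,\,R)|\,C_1\exp(c_1\lambda^{2\alpha/(\alpha-1)})$, whence $\log\log\sE_t(\lambda)^2\leq \frac{2\alpha}{\alpha-1}\log\lambda+O(1)$ and therefore $\limsup_{\lambda\to\infty}(\log\log\sE_t(\lambda)^2)/\log\lambda\leq \frac{2\alpha}{\alpha-1}$. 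For the lower bound I would discard the boundary layer and restrict the integral to $B(0,\,R-\ve)$ with $\ve>0$ fixed. On this set the lower estimate from the proof of Theorem~\ref{white} is again uniform, yielding $c_2,c_2'>0$ with $\inf_{x\in B(0,\,R-\ve)}\E|u_t(x)|^2\geq c_2\exp(c_2'\lambda^{2\alpha/(\alpha-1)})$; consequently $\sE_t(\lambda)^2\geq |B(0,\,R-\ve)|\,c_2\exp(c_2'\lambda^{2\alpha/(\alpha-1)})$ and $\liminf_{\lambda\to\infty}(\log\log\sE_t(\lambda)^2)/\log\lambda\geq \frac{2\alpha}{\alpha-1}$. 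Combining the two bounds identifies the limit, and with the reduction above this gives $e(t)=\frac{2\alpha}{\alpha-1}$.

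The main obstacle is the uniformity in $x$ of the bounds coming out of Theorem~\ref{white}, since that theorem is stated only as a pointwise limit. I expect the upper bound to be uniform essentially for free, because it is typically obtained from a Gronwall/renewal argument applied to $\sup_x\E|u_t(x)|^2$. The delicate point is the uniform lower bound on $B(0,\,R-\ve)$: near the boundary the killed heat kernel $p_D$ decays and the solution is damped, so one must stay on a compact subset and invoke interior lower bounds for $p_D$ in order to keep the constants $c_2,c_2'$ independent of $x$. Provided the proof of Theorem~\ref{white} is organized so that its lower estimate already holds uniformly on $B(0,\,R-\ve)$ — which is the natural way to establish a statement valid for every such $x$ — the integration step is routine and the corollary follows.
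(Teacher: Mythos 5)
Your proposal is correct and matches the paper's own argument: the paper likewise sandwiches $\int_{B(0,\,R)}\E|u_t(x)|^2\,\d x$ between $2R\sup_{x\in[-R,\,R]}\E|u_t(x)|^2$ and $2(R-\ve)\inf_{x\in[-(R-\ve),\,R-\ve]}\E|u_t(x)|^2$ and then invokes the bounds behind Theorem~\ref{white}, which are indeed proved uniformly, namely for $\sS_t(\lambda)=\sup_{x\in B(0,\,R)}\E|u_t(x)|^2$ and $\sI_{\ve,t}(\lambda)=\inf_{x\in B(0,\,R-\ve)}\E|u_t(x)|^2$. The uniformity you flag as the delicate point is thus already built into the paper's propositions, and the rest of your argument (including the harmless $\sE_t$ versus $\sE_t^2$ reduction) is routine and sound.
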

It can be seen that when $\alpha = 2$ this gives the result in \cite{foonJose}.  Our second main result concerns coloured noise driven equations.  Consider
\begin{equation}\label{eq:dirichlet:colored}
\left|\begin{split}
&\partial_t u_t(x)=\sL u_t(x)+\lambda \sigma(u_t(x))\dot{F}(t,\,x),\\
&u_t(x)=0, \quad \text{for all}\quad x\in B(0,\,R)^c.
\end{split}
\right.
\end{equation}
This equation is exactly the same as \eqref{eq:dirichlet} except for the noise which is now given by $\dot{F}$ and can be described as follows.

\begin{equation*}
\E[\dot{F}(t,\,x)\dot{F}(s,\,y)]=\delta_{0}(t-s)f(x,y),
\end{equation*}
where $f$ is given by the socalled Riesz kernel:
\begin{equation*}
f(x,\,y):=\frac{1}{|x-y|^\beta}.
\end{equation*}
Here $\beta$ is some positive parameter satisfying $\beta<d$.  Other than the noise term, we will work under the exact conditions as those for equation \eqref{eq:dirichlet}. The mild solution will thus satisfy the following integral equation.
\begin{equation}\label{mild:dirichlet:colored}
u_t(x)=
(\sG_Du)_t(x)+ \lambda \int_{B(0,\,R)}\int_0^t p_D(t-s,\,x,\,y)\sigma(u_s(y))F(\d s\,\d y).
\end{equation}

Existence-uniqueness considerations will force us to further impose $\beta<\alpha$, see for instance \cite{ferrante}.  Our first result concerning \eqref{eq:dirichlet:colored} is the following.

\begin{theorem}\label{coloured}
Fix $\ve>0$ and let $x\in B(0,\,R-\ve)$, then for any fixed $t>0$,
\begin{equation*}
\lim_{\lambda\rightarrow \infty}\frac{\log \log \E|u_t(x)|^2}{\log \lambda}=\frac{2\alpha}{\alpha-\beta},
\end{equation*}
where $u_t$ is the unique solution to \eqref{eq:dirichlet:colored}.
\end{theorem}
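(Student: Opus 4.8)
The plan is to reduce the statement to the asymptotic analysis, as $\lambda\to\infty$, of a renewal-type integral inequality whose kernel encodes the interplay between the fractional Dirichlet heat kernel and the Riesz correlation $f$. First I would square the mild formulation \eqref{mild:dirichlet:colored} and take expectations; since the stochastic integral has mean zero and $\dot F$ has the stated covariance, this yields the identity
\begin{equation*}
\E|u_t(x)|^2=\bigl((\sG_Du)_t(x)\bigr)^2+\lambda^2\int_0^t\int_{B(0,R)^2}p_D(t-s,x,y)\,p_D(t-s,x,z)\,\E[\sigma(u_s(y))\sigma(u_s(z))]\,f(y,z)\,\d y\,\d z\,\d s .
\end{equation*}
Bounding the mixed moment above by $L_\sigma^2\,\E[|u_s(y)||u_s(z)|]\le L_\sigma^2\sqrt{\E u_s(y)^2}\sqrt{\E u_s(z)^2}$ via Cauchy--Schwarz and passing to $H(t):=\sup_{x\in B(0,R)}\E|u_t(x)|^2$, I obtain the renewal inequality $H(t)\le c_0+\lambda^2L_\sigma^2\int_0^tH(s)\,K(t-s)\,\d s$, where
\[
K(r):=\sup_{x\in B(0,R)}\int_{B(0,R)^2}p_D(r,x,y)\,p_D(r,x,z)\,f(y,z)\,\d y\,\d z .
\]

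The exponent $\tfrac{2\alpha}{\alpha-\beta}$ is produced entirely by the small-$r$ behaviour of $K$, which I would pin down by a scaling argument. Using the global domination of $p_D$ by the free $\alpha$-stable kernel, and on the interior the two-sided comparison $p_D(r,x,y)\asymp r^{-d/\alpha}\Phi\!\left(r^{-1/\alpha}(x-y)\right)$, I substitute $y=x+r^{1/\alpha}v$ and $z=x+r^{1/\alpha}w$: the two Jacobians contribute $r^{2d/\alpha}$, the two kernels contribute $r^{-2d/\alpha}$, and $|y-z|^{-\beta}$ contributes $r^{-\beta/\alpha}$, so that $K(r)\asymp r^{-\beta/\alpha}$ as $r\downarrow0$, the surviving constant being $\int\!\int\Phi(v)\Phi(w)|v-w|^{-\beta}\,\d v\,\d w$. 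Finiteness of this constant uses $\beta<d$ (integrability of the Riesz singularity near the diagonal together with the integrability of $\Phi$), while the hypothesis $\beta<\alpha$ enters as $\beta/\alpha<1$, which is exactly what makes $K$ locally integrable so that the renewal analysis is meaningful.

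For the upper bound I would feed $K(r)\le Cr^{-\beta/\alpha}$ into the renewal inequality and invoke the Laplace-transform criterion for such inequalities used in \cite{foonJose}: the growth rate of $H$ is controlled by the unique $\rho=\rho(\lambda)$ solving $\lambda^2L_\sigma^2\int_0^\infty e^{-\rho r}K(r)\,\d r=1$. Since $\int_0^\infty e^{-\rho r}r^{-\beta/\alpha}\,\d r=\Gamma(1-\beta/\alpha)\,\rho^{-(\alpha-\beta)/\alpha}$, solving gives $\rho(\lambda)\asymp\lambda^{2\alpha/(\alpha-\beta)}$, whence $H(t)\le C\exp\!\bigl(c\,\lambda^{2\alpha/(\alpha-\beta)}t\bigr)$ and therefore $\limsup_{\lambda\to\infty}\frac{\log\log\E|u_t(x)|^2}{\log\lambda}\le\frac{2\alpha}{\alpha-\beta}$.

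The lower bound is the delicate part and is where I expect the main obstacle. Fixing $x\in B(0,R-\ve)$, I would discard boundary effects by restricting both spatial integrals to a compact $K_0\subset B(0,R-\ve)$ on which $p_D$ admits the matching lower bound, and use that, by positivity of $u$ (which forces $\sigma(u_s)\ge l_\sigma u_s\ge0$ since $\sigma(0)=0$ and $l_\sigma>0$ preclude a sign change), $\E[\sigma(u_s(y))\sigma(u_s(z))]\ge l_\sigma^2\,\E[u_s(y)u_s(z)]$. The difficulty is that this exposes the \emph{two-point} correlation $\E[u_s(y)u_s(z)]$ rather than a single second moment, so a naive renewal lower bound does not close. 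I would resolve this either by iterating the mild formula once more to sign the nonnegative cross term and then localizing to $y\approx z$, where continuity and positivity give $\E[u_s(y)u_s(z)]\gtrsim\inf_{w\in K_0}\E u_s(w)^2$, or by running the renewal argument directly on the two-point function and extracting the same rate. Either route produces a reverse renewal inequality with kernel $\gtrsim r^{-\beta/\alpha}$, giving $\liminf_{\lambda\to\infty}\frac{\log\log\E|u_t(x)|^2}{\log\lambda}\ge\frac{2\alpha}{\alpha-\beta}$; combined with the upper bound this proves the theorem. The uniform interior heat-kernel comparison and the control of the two-point correlation are the two ingredients I expect to require the most care.
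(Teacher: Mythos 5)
Your upper bound is sound and follows the paper's route: the paper proves exactly your kernel estimate in Lemma \ref{integral-upper} (via domination of $p_D$ by the free kernel, Chapman--Kolmogorov and scaling, giving $K(r)\leq c\,r^{-\beta/\alpha}$), feeds it into the renewal inequality for $\sS_t(\lambda)=\sup_x\E|u_t(x)|^2$, and closes with the Mittag--Leffler asymptotics of Proposition \ref{prop:upper-renew}; your Laplace-transform closure of the same renewal inequality is an equivalent variant and delivers the same $\limsup$.

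The lower bound, however, has a genuine gap, and it sits precisely at the step you flagged. Your route (i) rests on the inequality $\E[u_s(y)u_s(z)]\gtrsim\inf_{w}\E|u_s(w)|^2$ for $|y-z|\lesssim (t-s)^{1/\alpha}$ with a constant independent of $\lambda$. This does not follow from continuity and positivity, and it is false in the regime that matters: positivity only yields $\E[u_s(y)u_s(z)]\geq (\sG_Du)_s(y)(\sG_Du)_s(z)\geq g_s^2$, a bound that stays bounded in $\lambda$, while in the intermittent regime the two-point correlation at a fixed separation is exponentially smaller (in a power of $\lambda$) than the second moment --- the correlation length shrinks like a negative power of $\lambda$, so no $\lambda$-uniform constant can work on the domain $|y-z|\leq (t-s)^{1/\alpha}$ with $s$ bounded away from $t$. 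Your route (ii), ``run the renewal argument directly on the two-point function,'' is indeed what the paper does, but naming it is not doing it: this is the heart of the proof. In Proposition \ref{prop:recur} the paper iterates the mild formulation \eqref{mild:dirichlet:colored} \emph{infinitely} many times, obtaining a chaos-type expansion whose $k$-th term carries $2k$ Dirichlet kernels and $k$ Riesz factors; it then localizes both in time (each increment restricted to an interval of length $t/k$) and in space (all points $z_i,z_i'$ kept inside $B(x,s_1^{1/\alpha}/2)\cap B(z_{i-1},s_i^{1/\alpha})$, so that Proposition \ref{lower} applies and $f(z_i,z_i')\geq s_1^{-\beta/\alpha}$), which bounds the $k$-th term below by $(\lambda l_\sigma c_1)^{2k}(t/k)^{k(\alpha-\beta)/\alpha}$. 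The resulting series is then shown, via Lemma \ref{sum} with $\rho=(\alpha-\beta)/\alpha$ and $\theta=\lambda^2$, to grow like $\exp\bigl(c\,\lambda^{2\alpha/(\alpha-\beta)}\bigr)$, giving the $\liminf$; this nested localization never compares two-point to one-point moments and is exactly what your sketch omits. A further (minor) omission: since the interior comparison $p_D\asymp p$ holds only for $t\leq t_0$, this argument first yields the theorem for small $t$ only, and the paper extends it to all $t>0$ by a restart argument applied to $\E|u_{T+t}(x)|^2$, a step absent from your proposal.
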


\begin{corollary}\label{cor:coloured}
The excitation index of the solution to \eqref{eq:dirichlet:colored} is $\frac{2\alpha}{\alpha-\beta}$.
\end{corollary}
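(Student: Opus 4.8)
The plan is to deduce Corollary \ref{cor:coloured} directly from the pointwise statement of Theorem \ref{coloured} by sandwiching the energy $\sE_t(\lambda)^2=\int_{B(0,\,R)}\E|u_t(x)|^2\,\d x$ between a supremum and a restricted infimum of the second moment, and then exploiting the fact that the double logarithm is insensitive to multiplicative constants, to the volume of the domain, and to the outer square root. Concretely, for any quantity $M(\lambda)\to\infty$ and constants $c_1,c_2>0$ one has
\[
\lim_{\lambda\rightarrow\infty}\frac{\log\log\bigl(c_1 M(\lambda)^{c_2}\bigr)}{\log\lambda}=\lim_{\lambda\rightarrow\infty}\frac{\log\log M(\lambda)}{\log\lambda}
\]
whenever the right-hand limit exists, since $\log\log(c_1 M^{c_2})=\log\log M+\log\bigl(c_2+\tfrac{\log c_1}{\log M}\bigr)$ and the additive correction is negligible after division by $\log\lambda$. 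This elementary observation drives both bounds.

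For the upper bound I would write
\[
\sE_t(\lambda)^2=\int_{B(0,\,R)}\E|u_t(x)|^2\,\d x\le |B(0,\,R)|\;\sup_{x\in B(0,\,R)}\E|u_t(x)|^2 .
\]
The upper-bound half of the proof of Theorem \ref{coloured} produces an estimate on the second moment that is uniform over $x\in B(0,\,R)$ and whose double-logarithmic growth rate is $\frac{2\alpha}{\alpha-\beta}$ (uniformity is natural here because the Dirichlet killing only makes the moment smaller near $\partial B(0,\,R)$). Combining this with the display above and the insensitivity of $\log\log$ to the factor $|B(0,\,R)|$ and to the square root yields $\limsup_{\lambda\rightarrow\infty}\frac{\log\log\sE_t(\lambda)}{\log\lambda}\le\frac{2\alpha}{\alpha-\beta}$.

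For the lower bound I would discard the contribution of a boundary layer and keep only the interior: fixing $\ve>0$,
\[
\sE_t(\lambda)^2\ge\int_{B(0,\,R-\ve)}\E|u_t(x)|^2\,\d x\ge |B(0,\,R-\ve)|\;\inf_{x\in B(0,\,R-\ve)}\E|u_t(x)|^2 .
\]
Theorem \ref{coloured} supplies the rate $\frac{2\alpha}{\alpha-\beta}$ at each interior point; provided the implied constants do not degenerate as $x$ ranges over $B(0,\,R-\ve)$, the infimum inherits the same double-logarithmic rate and one concludes $\liminf_{\lambda\rightarrow\infty}\frac{\log\log\sE_t(\lambda)}{\log\lambda}\ge\frac{2\alpha}{\alpha-\beta}$. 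Together with the upper bound this gives $e(t)=\frac{2\alpha}{\alpha-\beta}$.

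The main obstacle is exactly this uniformity of the lower bound over a set of positive measure. Read literally, the pointwise theorem controls each $x$ separately, and the thresholds in $\lambda$ together with the multiplicative constants could in principle deteriorate as $x\to\partial B(0,\,R)$; a single interior point contributes nothing to the spatial integral, so a genuine positive-measure lower bound is required. The resolution is to work on $B(0,\,R-\ve)$ rather than on all of $B(0,\,R)$ and to invoke the fact that the heat-kernel lower bounds for $p_D$ used in the proof of Theorem \ref{coloured} hold uniformly on the compact set $\overline{B(0,\,R-\ve)}$; this prevents the degeneration and makes the infimum above legitimate.
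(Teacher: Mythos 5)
Your proposal is correct and matches the paper's own argument: the paper likewise sandwiches $\sE_t(\lambda)^2$ between $|B(0,R)|\sup_{x\in B(0,R)}\E|u_t(x)|^2$ and $|B(0,R-\ve)|\inf_{x\in B(0,R-\ve)}\E|u_t(x)|^2$, then invokes the theorem together with the insensitivity of $\log\log$ to constants and the square root. Your worry about uniformity is also resolved exactly as in the paper, since the propositions proving Theorem \ref{coloured} are already stated for the supremum $\sS_t(\lambda)$ and the infimum $\sI_{\ve,t}(\lambda)$ rather than pointwise.
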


It is clear that our results are significant extensions of those in \cite{foonJose} and \cite{Khoshnevisan:2013ab}. The techniques are also considerably harder and required some new highly non-trivial ideas which we now mention.
\begin{itemize}
\item We need to compare the heat kernel estimates for killed stable process with that of ``unkilled" one. To do that, we will need sharp estimates of the Dirichlet heat kernel.
\item We will also need to study some renewal-type inequalities and by doing so, we come across the Mittag-Leffler function whose asymptotic properties become crucial.
\item While the above two ideas are enough for the proof of Theorem \ref{white}, we will also need to significantly modify the localisation techniques of \cite{Khoshnevisan:2013ab} to complete the proof of Theorem \ref{coloured}.
\end{itemize}

Our method seems suited for the study of a much wider class of equations. To illustrate this, we devote a section to various extensions.

Here is a plan of the article. In section 2, we collect some information about the heat kernel and the renewal-type inequalities.  In section 3, we prove the main results concerning \eqref{eq:dirichlet}. Section 4 contains analogous proofs for \eqref{eq:dirichlet:colored}.  In section 5, we extend our study to a much wider class of equations.

Finally, throughout this paper, the letter $c$ with or without subscripts will denote constants whose exact values are not important to us and can vary from line to line.

\section{Preliminaries.}

Let $X_t$ denote the $\alpha$-stable process on $\R^d$ with $p(t,\,x,\,y)$ being its transition density. It is well known that
\begin{equation*}
c_1\left(t^{-d/\alpha}\wedge \frac{t}{|x-y|^{d+\alpha}}\right)\leq p(t,\,x,\,y)\leq c_2\left(t^{-d/\alpha}\wedge \frac{t}{|x-y|^{d+\alpha}}\right),
\end{equation*}
where $c_1$ and $c_2$ are positive constants.  We define the first exit of time $X_t$ from the ball $B(0,\,R)$ by
\begin{equation*}
\tau_{B(0,\,R)}:=\inf\{t>0, X_t\notin B(0,\,R) \}.
\end{equation*}
We then have the following representation for $p_D(t,\,x,\,y)$
\begin{equation*}
p_D(t,\,x,\,y)=p(t,\,x,\,y)-\E^x[p(t-\tau_{B(0,\,R)}, X_{\tau_{B(0,\,R)}}, y);\tau_{B(0,\,R)}<t].
\end{equation*}
From the above, it is immediate that
\begin{equation*}
p_D(t,\,x,\,y)\leq p(t,\,x,\,y) \quad\text{for all}\quad x,\,y\in \R^d.
\end{equation*}
This in turn implies that 
\begin{equation}\label{heat:upper}
p_D(t,\,x,\,y)\leq \frac{c_1}{t^{d/\alpha}}\quad\text{for all}\quad x,\,y\in \R^d.
\end{equation}
We now provide some sort of converse to the above inequality.  Not surprisingly, this inequality will hold for small times only.

\begin{proposition}\label{lower}
Fix $\ve>0$. Then for all $x,\,y\in B(0,\,R-\ve)$, there exists a $t_0>0$ and a constant $c_1$, such that
\begin{equation*}
p_D(t,\,x,\,y)\geq c_1 p(t,\,x,\,y),
\end{equation*}
whenever $t\leq t_0$. And if we further impose that  $|x-y|\leq t^{1/\alpha}$, we obtain the following
\begin{equation}\label{heat:lower}
p_D(t,\,x,\,y)\geq\frac{c_2}{t^{d/\alpha}},
\end{equation}
where $c_2$ is some positive constant.
\end{proposition}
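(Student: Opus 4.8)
The plan is to work from the representation
\[
p_D(t,\,x,\,y)=p(t,\,x,\,y)-\E^x\bigl[p(t-\tau_{B(0,\,R)},\,X_{\tau_{B(0,\,R)}},\,y);\,\tau_{B(0,\,R)}<t\bigr]
\]
recorded above, and to show that for $x,\,y$ bounded away from the boundary the subtracted term is negligible relative to $p(t,\,x,\,y)$ once $t$ is small. Writing $\tau:=\tau_{B(0,\,R)}$ and
\[
I(t,\,x,\,y):=\E^x\bigl[p(t-\tau,\,X_\tau,\,y);\,\tau<t\bigr],
\]
the entire content of the first inequality is the estimate $I(t,\,x,\,y)\le\tfrac12 p(t,\,x,\,y)$ for small $t$, which then gives $p_D\ge\tfrac12 p$.

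First I would control the integrand on the event $\{\tau<t\}$. Since the process leaves $B(0,\,R)$, on this event $X_\tau\notin B(0,\,R)$ and hence $|X_\tau|\ge R$, while $|y|\le R-\ve$; therefore $|X_\tau-y|\ge\ve$. Feeding this into the global upper bound for $p$ and using $t-\tau\le t$ yields the deterministic bound
\[
p(t-\tau,\,X_\tau,\,y)\le c_2\,\frac{t-\tau}{|X_\tau-y|^{d+\alpha}}\le\frac{c_2\,t}{\ve^{d+\alpha}}
\]
on $\{\tau<t\}$, so that $I(t,\,x,\,y)\le c_2\,\ve^{-(d+\alpha)}\,t\,\P^x(\tau<t)$.

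The next, and main, step is an exit-time estimate. Since $x\in B(0,\,R-\ve)$ we have $B(x,\,\ve)\subseteq B(0,\,R)$ and hence $\tau\ge\tau_{B(x,\,\ve)}$, so it suffices to bound $\P^x(\tau_{B(x,\,\ve)}<t)$. The key input is the standard small-time exit estimate for the isotropic $\alpha$-stable process,
\[
\P^x\bigl(\tau_{B(x,\,r)}<t\bigr)\le c\,\frac{t}{r^\alpha}\qquad\text{for all }r,\,t>0,
\]
which is where the jump structure of the process enters and is the technical heart of the argument; it can be obtained from the L\'evy system formula (the rate of jumps out of $B(x,\,r)$ being comparable to $r^{-\alpha}$) or from the tail bound on $X_t$ together with the strong Markov property. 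Combining the two displays gives $I(t,\,x,\,y)\le c_3\,\ve^{-(d+2\alpha)}\,t^2$.

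Finally I would compare with the lower bound $p(t,\,x,\,y)\ge c_1\bigl(t^{-d/\alpha}\wedge t\,|x-y|^{-(d+\alpha)}\bigr)$. Since $x,\,y\in B(0,\,R-\ve)$ we have $|x-y|\le 2R$, and in either branch of the minimum the ratio $I(t,\,x,\,y)/p(t,\,x,\,y)$ is bounded by a constant (depending on $\ve,\,R$) times $t$ or times $t^{2+d/\alpha}$, hence tends to $0$ as $t\downarrow 0$. Choosing $t_0$ so small that this ratio is at most $\tfrac12$ for $t\le t_0$ yields $p_D\ge\tfrac12 p$, the first claim. For the second claim, under the extra hypothesis $|x-y|\le t^{1/\alpha}$ one checks that $t\,|x-y|^{-(d+\alpha)}\ge t^{-d/\alpha}$, so the minimum above equals $t^{-d/\alpha}$ and $p(t,\,x,\,y)\ge c_1\,t^{-d/\alpha}$; inserting this into $p_D\ge\tfrac12 p$ gives \eqref{heat:lower}. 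The main obstacle is thus the exit-time bound, together with the bookkeeping that makes the subtracted term genuinely of higher order in $t$ than $p(t,\,x,\,y)$.
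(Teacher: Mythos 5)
Your proposal is correct, but it takes a genuinely different route from the paper. The paper's proof is essentially one line modulo a citation: it invokes the sharp two-sided Dirichlet heat kernel estimate
\begin{equation*}
p_{D}(t,\,x,\,y)\geq c\left(1\wedge \frac{\delta^{\alpha/2}_{B(0,\,R)}(x)}{t^{1/2}}\right)\left(1\wedge \frac{\delta^{\alpha/2}_{B(0,\,R)}(y)}{t^{1/2}}\right)p(t,\,x,\,y),
\end{equation*}
where $\delta_{B(0,\,R)}(x)$ is the distance to the boundary, and then simply observes that for $x,\,y\in B(0,\,R-\ve)$ and $t\leq t_0:=\ve^\alpha$ both boundary factors equal $1$, giving $p_D\geq c\,p$ at once; the second claim then follows from the free-kernel lower bound exactly as in your last paragraph. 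You instead work from the hitting representation $p_D=p-\E^x[p(t-\tau,\,X_\tau,\,y);\tau<t]$ (which the paper records but never uses for this purpose) and show the subtracted term is negligible: the exit position is at distance at least $\ve$ from $y$, the kernel bound makes the integrand $O(t)$, and the exit-time estimate $\P^x(\tau_{B(x,\,\ve)}<t)\leq c\,t/\ve^\alpha$ makes the whole term $O(t^2)$, whereas $p(t,\,x,\,y)\gtrsim t$ uniformly on the ball, so the ratio vanishes as $t\downarrow 0$ uniformly in $x,\,y$. What each approach buys: the paper's argument is shorter but rests on a deep cited result (boundary-decay estimates of Bogdan et al.\ type), while yours is self-contained modulo the far more elementary and classical exit-time bound (Pruitt's estimate, or a L\'evy-system/scaling argument), and it yields the extra quantitative information that the boundary correction is of strictly higher order in $t$ than the free kernel. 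Your treatment of the second claim --- checking that $|x-y|\leq t^{1/\alpha}$ forces the minimum in the two-sided bound to be $t^{-d/\alpha}$ --- is identical to the paper's. The one point you should not gloss over if writing this up in full is a proof or precise reference for the exit-time estimate, since, as you say yourself, it is the technical heart of your argument.
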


\begin{proof}
Set $\delta_{B(0,\,R)}(x):=\text{dist}(x, B(0,\,R)^c)$.  It is known that
\begin{equation*}
p_{D}(t,\,x,\,y)\geq c_1\left(1\wedge \frac{\delta^{\alpha/2}_{B(0,\,R)}(x)}{t^{1/2}}\right)\left(1\wedge \frac{\delta^{\alpha/2}_{B(0,\,R)}(y)}{t^{1/2}}\right)p(t,\,x,\,y),
\end{equation*}
for some constant $c_1$. See for instance \cite{Bogdan} and references therein. Since $x\in B(0,\,R-\ve)$, we have $\delta_{B(0,\,R)}(x)\geq \ve$. Now choosing $t_0=\ve^\alpha$, we have $\delta^{\alpha/2}_{B(0,\,R)}(x)\geq t^{1/2}$ for all $t\leq t_0$. Similarly, we have $\delta^{\alpha/2}_{B(0,\,R)}(y)\geq t^{1/2}$ which together with the above display yield
\begin{equation*}
p_{D}(t,\,x,\,y)\geq c_2p(t,\,x,\,y)\quad \text{for all} \quad x,y\in B(0,\,R-\ve),
\end{equation*}
whenever $t\leq t_0$. We now use the fact that

\begin{equation*}
p(t,\,x,\,y)\geq c_3\left(\frac{t}{|x-y|^{d+\alpha}}\wedge t^{-d/\alpha} \right).
\end{equation*}
to end up with \eqref{heat:lower}.
\end{proof}
We now make a simple remark which will be important in the sequel.

\begin{remark}\label{deterministic}
Recall that for any $t>0$ and $x\in B(0,\,R)$.
\begin{equation*}
(\sG_Du)_t(x):=\int_{B(0,\,R)}p_D(t,\,x,\,y)u_0(y)\d y.
\end{equation*}
Fix $\ve>0$ and set $g_t:=\inf_{x\in B(0,\,R-\ve)} \inf_{s\leq t}(\sG_Du)_s(x)$. Then for any fixed $t>0$, we have $g_t=\inf_{x\in B(0,\,R-\ve)}(\sG_Du)_t(x)$  and $g_t>0$. This is because of $(\sG_Du)_s(x)$ is a decreasing function of $s$ and $p_D(t,\,x,\,y)$ is strictly positive for all $x,\,y\in B(0,\,R-\ve)$.
\end{remark}

We now give a definition of the Mittag-Leffler function which is denoted by $E_\beta$ where $\beta$ is some positive parameter.  Define

\begin{equation*}
E_\beta(t):=\sum_{n=0}^\infty \frac{t^{n}}{\Gamma(n\beta+1)}\quad\text{for}\quad t>0.
\end{equation*}

This function is well studied and crops up in a variety of settings including the study of fractional equations \cite{Mainardi}. In our context, we encounter it in the study of the renewal inequalities mentioned in the introduction.  Even though, a lot is known about this function, we will need the following simple fact whose statement is motivated by the use we make of it later.

\begin{proposition}\label{boundMLf}
For any fixed $t>0$, we have

\begin{equation*}
\limsup_{\theta\rightarrow \infty}\frac{\log\log E_\beta(\theta t)}{\log \theta}\leq \frac{1}{\beta},
\end{equation*}
and
\begin{equation*}
\liminf_{\theta\rightarrow \infty}\frac{\log\log E_\beta(\theta t)}{\log \theta}\geq \frac{1}{\beta}.
\end{equation*}
\end{proposition}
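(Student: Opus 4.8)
The plan is to show that $\log E_\beta(x)$ grows exactly like $x^{1/\beta}$ as $x\to\infty$: more precisely, that there are constants $0<c_1\le c_2<\infty$ and an $x_0$ with $c_1 x^{1/\beta}\le \log E_\beta(x)\le c_2 x^{1/\beta}$ for all $x\ge x_0$. Granting this, substituting $x=\theta t$ gives $\log\log E_\beta(\theta t)=\tfrac1\beta\log\theta+O(1)$, and dividing by $\log\theta$ and letting $\theta\to\infty$ yields both the $\limsup$ and the $\liminf$ statements at once (in fact the limit equals $1/\beta$). So the whole task reduces to the two-sided growth bound on $\log E_\beta$.

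For the lower bound I would keep a single term of the defining series: since every summand is positive, $E_\beta(x)\ge x^n/\Gamma(n\beta+1)$ for each fixed $n$. The natural choice is the index maximising this ratio. A short Stirling computation shows that the logarithm of the $n$-th term, namely $n\log x-\log\Gamma(n\beta+1)$, is (approximately) concave in $n$ and peaks near $n^\ast=x^{1/\beta}/\beta$, where its value is asymptotic to $x^{1/\beta}$ (one uses $\log(n^\ast\beta)=\tfrac1\beta\log x$, which makes the leading terms cancel). Taking $n=\lfloor n^\ast\rfloor$ therefore gives $\log E_\beta(x)\ge (1-o(1))x^{1/\beta}$, which is the desired lower estimate and already forces $\liminf_{\theta\to\infty}\log\log E_\beta(\theta t)/\log\theta\ge 1/\beta$.

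For the upper bound I would use the elementary Stirling estimate $\Gamma(n\beta+1)\ge (n\beta/e)^{n\beta}$ (valid once $n\beta\ge 1$) to write each term as $\exp(b_n)$ with $b_n=n\log x+n\beta-n\beta\log(n\beta)$. The function $b_n$ is concave in $n$ and, by the same computation as above, attains its maximum value $x^{1/\beta}$ at $n^\ast$; hence every term is at most $\exp(x^{1/\beta})$. Summing is then the only remaining point: for indices up to $2n^\ast$ one bounds the $O(x^{1/\beta})$ many terms by the maximum, while for $n\ge 2n^\ast$ the derivative $b_n'=\log x-\beta\log(n\beta)$ is bounded above by $-\beta\log 2<0$, so the terms decay geometrically and their sum is at most a constant multiple of $\exp(x^{1/\beta})$. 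Altogether $E_\beta(x)\le c\,x^{1/\beta}\exp(x^{1/\beta})\le \exp(2x^{1/\beta})$ for large $x$, giving $\log E_\beta(x)\le 2x^{1/\beta}$ and hence $\limsup_{\theta\to\infty}\log\log E_\beta(\theta t)/\log\theta\le 1/\beta$.

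The main obstacle is the upper bound, specifically taming the infinite series: one must check that neither the polynomially many near-maximal terms nor the geometric tail inflate the sum beyond a factor that is subexponential in $x^{1/\beta}$, and one must treat separately the finitely many small-$n$ terms where the crude Stirling lower bound on $\Gamma$ is unavailable (these contribute only a bounded amount and are harmless). An alternative, if one is willing to quote it, is the classical asymptotic $E_\beta(x)\sim\frac1\beta\exp(x^{1/\beta})$ as $x\to\infty$, from which both inequalities are immediate; but the self-contained single-term and Stirling argument above is in keeping with the elementary character of the statement.
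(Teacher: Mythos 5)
Your proof is correct, but it takes a genuinely different route from the paper's. The paper simply quotes the classical asymptotic $|E_\beta(z)-\tfrac{1}{\beta}e^{z^{1/\beta}}|=o(e^{z^{1/\beta}})$ (obtained via Laplace-transform techniques, with a citation to Henry), chooses $\epsilon<1/\beta$, and sandwiches $\log\log E_\beta(z)$ between $\log\bigl(\log(\tfrac{1}{\beta}\mp\epsilon)+z^{1/\beta}\bigr)$ before setting $z=\theta t$ --- this is exactly the ``alternative'' you mention in your final paragraph. Your argument instead extracts the two-sided bound $c_1x^{1/\beta}\le\log E_\beta(x)\le c_2x^{1/\beta}$ directly from the defining series: a single near-maximal term at $n^\ast=x^{1/\beta}/\beta$ for the lower bound, and for the upper bound Stirling's estimate $\Gamma(n\beta+1)\ge(n\beta/e)^{n\beta}$ together with concavity of the exponent $b_n=n\log x+n\beta-n\beta\log(n\beta)$, whose maximum is exactly $x^{1/\beta}$, with the sum tamed by counting the $O(x^{1/\beta})$ near-maximal terms and exploiting geometric decay beyond $2n^\ast$. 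What your approach buys is self-containedness and elementarity (no imported asymptotic from the literature); notably it is the same Stirling device the paper itself deploys in proving Lemma \ref{sum}, so it is very much in the paper's spirit. What the paper's approach buys is brevity and the sharp constant $1/\beta$ in front of the exponential, which is more than is needed here. One small imprecision in your write-up: the finitely many terms with $n\beta<1$ contribute an amount that grows polynomially in $x$ (each is of order $x^n$), not a ``bounded amount''; this is harmless, since anything polynomial is negligible against $\exp(x^{1/\beta})$ once you take $\log\log$, but the phrasing should be corrected.
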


\begin{proof}
By using Laplace transforms techniques, one can show that
\begin{equation*}
| E_\beta(z) - \frac{1}{\beta} e^{z^{1/\beta}} | = o (e^{z^{1/\beta}}).
\end{equation*}
See for instance \cite{Henry} and references therein for more details. Thus for any positive constant $\epsilon > 0$ there exists a $Z > 0$ such that for all $z > Z$
\begin{equation*}
  | E_\beta(z) - \frac{1}{\beta} e^{z^{1/\beta}} | \leq \epsilon e^{z^{1/\beta}}.
\end{equation*}
Choosing $\epsilon < 1/\beta$, it it easy to see that
\begin{equation*}
\log(\log (\frac{1}{\beta} - \epsilon) + z^{1/\beta}) \leq \log \log E_\beta(z) \leq \log( \log(\frac{1}{\beta} + \epsilon) + z^{1/\beta}).
\end{equation*}
Letting $z= \theta t$, the above yield the assertions of the proposition.
\end{proof}

What follows is a consequence of Lemma 14.1 of \cite{Khoshnevisan:2013aa}. But for the sake of completeness, we give a quick proof based on the asymptotic behaviour of the Mittag-Leffler function which we used in the above proof.  Fix $\rho>0$ and consider the following:

\begin{equation}\label{sum}
S(t):=\sum_{k=1}^\infty \left( \frac{t}{k^\rho}\right)^{k}\quad\text{for}\quad t>0.
\end{equation}

\begin{lemma}\label{sum}
For any fixed $t>0$, we have
\begin{equation*}
\liminf_{\theta\rightarrow \infty}\frac{\log \log S(\theta t)}{\log \theta}\geq \frac{1}{\rho}.
\end{equation*}
\end{lemma}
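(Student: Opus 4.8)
The plan is to bound $S$ from below by a Mittag-Leffler function and then invoke Proposition \ref{boundMLf}. Comparing general terms, the $k$-th summand of $S(t)$ is $t^{k}/k^{\rho k}$, while the $k$-th summand of $E_\rho(t)$ is $t^{k}/\Gamma(k\rho+1)$, so the whole matter reduces to comparing $k^{\rho k}$ with $\Gamma(k\rho+1)$. By Stirling's formula $\Gamma(k\rho+1)\sim\sqrt{2\pi k\rho}\,(k\rho/e)^{k\rho}=\sqrt{2\pi k\rho}\,k^{\rho k}(\rho/e)^{\rho k}$, so $\Gamma(k\rho+1)/k^{\rho k}$ grows like $\big[(\rho/e)^{\rho}\big]^{k}$ up to a polynomial factor. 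In particular there is a constant $c>0$, depending only on $\rho$, with $c^{k}k^{\rho k}\le\Gamma(k\rho+1)$ for every $k\ge1$: for all large $k$ one may take any $c<(\rho/e)^{\rho}$, and shrinking $c$ absorbs the finitely many small values of $k$, where the ratio $\Gamma(k\rho+1)/k^{\rho k}$ is merely some fixed positive number.

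With such a $c$ fixed, the term-by-term inequality $(ct)^{k}/\Gamma(k\rho+1)\le t^{k}/k^{\rho k}$ gives, upon summing over $k\ge1$,
\[
S(t)\ge\sum_{k=1}^{\infty}\frac{(ct)^{k}}{\Gamma(k\rho+1)}=E_\rho(ct)-1.
\]
Since $E_\rho(ct)\to\infty$ as $t\to\infty$, we have $S(t)\ge\tfrac12 E_\rho(ct)$ for all large $t$, and hence $\log\log S(t)\ge\log\log\big(\tfrac12 E_\rho(ct)\big)=\log\log E_\rho(ct)+o(1)$, the additive $\log\tfrac12$ being negligible once $\log E_\rho(ct)$ is large.

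It then remains to feed this into the liminf. Writing $E_\rho(c\theta t)=E_\rho\big(\theta\cdot(ct)\big)$ and applying Proposition \ref{boundMLf} with $\beta=\rho$ and fixed time $ct>0$, we obtain
\[
\liminf_{\theta\to\infty}\frac{\log\log S(\theta t)}{\log\theta}\ge\liminf_{\theta\to\infty}\frac{\log\log E_\rho(\theta\cdot ct)}{\log\theta}\ge\frac1\rho,
\]
which is the claim. The constant $c$ and the factor $\tfrac12$ disappear in the limit, since $c$ only rescales the fixed argument (to which Proposition \ref{boundMLf} applies verbatim) and the factor $\tfrac12$ contributes a term that vanishes after division by $\log\theta\to\infty$.

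The only delicate point is the uniform Stirling comparison $c^{k}k^{\rho k}\le\Gamma(k\rho+1)$ over all $k\ge1$: the asymptotic takes care of large $k$, and positivity of each finite ratio takes care of the remaining $k$, so this is bookkeeping rather than anything substantive. Alternatively one can bypass the Mittag-Leffler function entirely by keeping only the single largest term: optimizing $t^{k}/k^{\rho k}$ over $k$ yields the maximizer $k^{\ast}=t^{1/\rho}/e$ and the lower bound $S(t)\ge\exp\big((\rho/e)\,t^{1/\rho}\big)$ up to subexponential corrections, from which the same conclusion follows directly.
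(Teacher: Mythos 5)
Your proof is correct and follows essentially the same route as the paper's: both compare $k^{\rho k}$ with $\Gamma(k\rho+1)$ to bound $S(t)$ below by a Mittag-Leffler function evaluated at a rescaled argument, and then invoke Proposition \ref{boundMLf}; your only variation is shrinking the constant $c$ so the term-by-term inequality holds for all $k\ge 1$, where the paper instead keeps $(\rho/e)^{\rho}$ and subtracts the finitely many terms with $k<N$. The single-largest-term alternative you sketch at the end would also work, but it is not the paper's argument.
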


\begin{proof}
From the asymptotic property of the Gamma function, there exists an $N>0$ such that for $k\geq N$, we have $\Gamma(k\rho+1)\geq \left(\frac{\rho k}{e}\right)^{\rho k}$. We thus have
\begin{equation*}
\begin{aligned}
S(t)&\geq\sum_{k=N}^\infty\left[\left(\frac{\rho}{e}\right)^\rho t\right]^{k}\frac{1}{\Gamma(k\rho+1)}\\
&=E_\rho[\left(\frac{\rho}{e}\right)^\rho t]-\sum_{k<N}\left[\left(\frac{\rho}{e}\right)^\rho t\right]^{k}\frac{1}{\Gamma(k\rho+1)}.
\end{aligned}
\end{equation*}
An application of Proposition \ref{boundMLf} proves the result.
\end{proof}

We now present the renewal inequalities.
\begin{proposition}\label{prop:upper-renew}
Let $T\leq \infty$ and $\beta>0$.  Suppose that $f(t)$ is a locally integrable function satisfying

\begin{equation}\label{renew:upper}
f(t)\leq c_1+\kappa\int_0^t(t-s)^{\beta-1}f(s)\d s\quad\text{for all}\quad0\leq t\leq T,
\end{equation}
where $c_1$ is some positive number.  Then for any $t\in(0,T]$, we have the following

\begin{equation*}
\limsup_{\kappa\rightarrow \infty}\frac{\log \log f(t)}{\log \kappa}\leq \frac{1}{\beta}.
\end{equation*}
\end{proposition}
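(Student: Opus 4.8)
The plan is to turn the integral inequality into an explicit Mittag-Leffler bound and then quote Proposition \ref{boundMLf}. Write $Bf(t):=\kappa\int_0^t(t-s)^{\beta-1}f(s)\,\d s$, so that the hypothesis \eqref{renew:upper} reads $f\leq c_1+Bf$ on $[0,T]$. Since the kernel $(t-s)^{\beta-1}$ is nonnegative, $B$ is monotone: $g\leq h$ implies $Bg\leq Bh$. This monotonicity (rather than any sign assumption on $f$) lets me iterate by repeated substitution,
\[
f\leq c_1+Bf\leq c_1+c_1 B1+B^2f\leq\cdots\leq c_1\sum_{k=0}^{n-1}B^k1+B^nf,
\]
for every $n\geq 1$, where $1$ denotes the constant function equal to one.

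The next step is to evaluate the iterated kernels. Since $B=\kappa\Gamma(\beta)I^\beta$, where $I^\beta$ is the Riemann--Liouville fractional integral of order $\beta$, the semigroup identity $I^\beta I^\gamma=I^{\beta+\gamma}$ together with $I^{\mu}1(t)=t^{\mu}/\Gamma(\mu+1)$ gives $B^k1(t)=(\kappa\Gamma(\beta))^k t^{k\beta}/\Gamma(k\beta+1)$. Summing over $k$ produces exactly the series from the definition of the Mittag-Leffler function,
\[
c_1\sum_{k=0}^{\infty}B^k1(t)=c_1\sum_{k=0}^\infty\frac{(\kappa\Gamma(\beta)t^\beta)^k}{\Gamma(k\beta+1)}=c_1 E_\beta(\kappa\Gamma(\beta)t^\beta).
\]
It then remains to show the remainder vanishes. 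Using local integrability of $f$ and the bound $(t-s)^{n\beta-1}\leq t^{n\beta-1}$ for $s\in[0,t]$ once $n\beta\geq 1$, I get $|B^nf(t)|\leq(\kappa\Gamma(\beta))^n t^{n\beta-1}\Gamma(n\beta)^{-1}\int_0^t|f(s)|\,\d s$, which tends to $0$ as $n\to\infty$ because the factorial-type growth of $\Gamma(n\beta)$ beats the geometric factor $(\kappa\Gamma(\beta)t^\beta)^n$. Passing to the limit in the displayed iteration yields the clean bound $f(t)\leq c_1 E_\beta(\kappa\Gamma(\beta)t^\beta)$.

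Finally, I would feed this into Proposition \ref{boundMLf}. For fixed $t>0$ set the constant $C:=\Gamma(\beta)t^\beta$, so that $f(t)\leq c_1 E_\beta(C\kappa)$. Because $\log(c_1 E_\beta(C\kappa))=\log c_1+\log E_\beta(C\kappa)$ and the second term dominates as $\kappa\to\infty$, one has $\log\log f(t)\leq\log\log E_\beta(C\kappa)+o(1)$. The upper-bound half of Proposition \ref{boundMLf}, applied with $\theta=\kappa$ and the fixed value $C$ in place of $t$, gives $\limsup_{\kappa\to\infty}\log\log E_\beta(C\kappa)/\log\kappa\leq 1/\beta$, and combining the two statements delivers the claimed estimate.

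I expect the main obstacle to be the rigorous justification of the iteration for a merely locally integrable (possibly sign-changing) $f$: the monotonicity of $B$ must be invoked explicitly in place of positivity of $f$, and the comparison of $\Gamma(n\beta)$ against $(\kappa\Gamma(\beta)t^\beta)^n$ must be carried out carefully so that the remainder $B^nf$ genuinely tends to zero before one can pass to the limit and identify the resulting series with $E_\beta$.
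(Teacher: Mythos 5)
Your proposal is correct and follows essentially the same route as the paper's own proof: iterate the inequality via the operator $\kappa\int_0^t(t-s)^{\beta-1}(\cdot)\,\d s$, evaluate the iterated kernels to recognize the series $c_1 E_\beta(\kappa\Gamma(\beta)t^\beta)$, and conclude with Proposition \ref{boundMLf} applied with $\theta=\kappa$ and fixed $t$. The only difference is that you make explicit two points the paper leaves implicit --- the monotonicity of the integral operator needed to justify the iteration for possibly sign-changing $f$, and the estimate showing the remainder $(\sA^n f)(t)\to 0$ --- which is a welcome tightening rather than a new argument.
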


\begin{proof}
We begin by setting $(\sA\psi)(t):=\kappa\int_0^t(t-s)^{\beta-1}\psi(s)\d s$ where $\psi$ can be any locally integrable function. And for any fixed integer $k>1$, we have $(\sA^k\psi)(t):=\kappa\int_0^t(t-s)^{\beta-1}(\sA^{k-1}\psi)(s)\d s$. We further set $1(s):=1$ for all $0\leq s\leq T$. With these notations, \eqref{renew:upper} can be succinctly written as $f(t)\leq c_1+(\sA f)(t)$
which upon iterating becomes
\begin{equation}\label{iteration}
f(t)\leq c_1\sum_{k=0}^{n-1}(\sA^k1)(t)+(\sA^n f)(t).
\end{equation}
Some further computations show that
\begin{equation*}
(\sA^n f)(t)=\frac{(\kappa\Gamma(\beta))^n}{\Gamma(n\beta)}\int_0^t(t-s)^{n\beta-1}f(s)\,\d s
\end{equation*}
and therefore we also have
\begin{equation*}
(\sA^n 1)(t)=\frac{(\kappa\Gamma(\beta))^nt^{n\beta}}{\Gamma(n\beta+1)}.
\end{equation*}
As $n\rightarrow \infty$, we have $(\sA^n f)(t)\rightarrow 0$. We thus end up with
\begin{equation*}
\begin{aligned}
f(t)&\leq c_1\sum_{k=0}^\infty(\sA^k1)(t)\\
&=c_1\sum_{k=0}^\infty \frac{(\kappa\Gamma(\beta))^nt^{n\beta}}{\Gamma(n\beta+1)}\\
&=c_1E_\beta(\kappa\Gamma(\beta) t^\beta).
\end{aligned}
\end{equation*}
Keeping in mind that we are interested in the behaviour as $\kappa$ tends to infinity while $t$ is fixed, we can apply Proposition \ref{boundMLf} to obtain the result.
\end{proof}

We have the ``converse" of the above result.

\begin{proposition}\label{prop:lower-renew}
Let $T\leq \infty$ and $\beta>0$.  Suppose that $f(t)$ is a nonnegative locally integrable function satisfying

\begin{equation}\label{renew:lower}
f(t)\geq c_2+\kappa\int_0^t(t-s)^{\beta-1}f(s)\d s\quad\text{for all}\quad0\leq t\leq T,
\end{equation}
where $c_2$ is some positive number.  Then for any $t\in(0,T]$, we have the following

\begin{equation*}
\liminf_{\kappa\rightarrow \infty}\frac{\log \log f(t)}{\log \kappa}\geq \frac{1}{\beta}.
\end{equation*}

\end{proposition}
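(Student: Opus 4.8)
The plan is to mirror the iteration argument used for Proposition \ref{prop:upper-renew}, but now to exploit the nonnegativity of $f$ so as to discard the remainder term rather than having to prove that it vanishes. With the same operator $(\sA\psi)(t):=\kappa\int_0^t(t-s)^{\beta-1}\psi(s)\,\d s$ and the convention $1(s):=1$ for $0\leq s\leq T$, the hypothesis \eqref{renew:lower} reads $f(t)\geq c_2+(\sA f)(t)$. First I would record that $\sA$ is a positive linear operator: since the kernel $(t-s)^{\beta-1}$ is nonnegative and $\kappa>0$, we have $\psi\geq 0\Rightarrow\sA\psi\geq 0$, and hence $\sA$ is monotone. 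Applying $\sA$ repeatedly to $f\geq c_2+\sA f$ and using linearity then yields, for every $n\geq 1$,
\begin{equation*}
f(t)\geq c_2\sum_{k=0}^{n-1}(\sA^k 1)(t)+(\sA^n f)(t).
\end{equation*}
The key structural simplification compared with the upper bound is that, because $f\geq 0$ and $\sA$ is positive, the remainder satisfies $(\sA^n f)(t)\geq 0$ and may simply be dropped; no convergence estimate on $(\sA^n f)(t)$ is needed.

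Next I would reuse the explicit computation $(\sA^k 1)(t)=(\kappa\Gamma(\beta))^k t^{k\beta}/\Gamma(k\beta+1)$ already established in the proof of Proposition \ref{prop:upper-renew}. Letting $n\to\infty$ and summing the series gives the clean lower bound
\begin{equation*}
f(t)\geq c_2\sum_{k=0}^\infty(\sA^k 1)(t)=c_2\,E_\beta\!\left(\kappa\Gamma(\beta)t^\beta\right).
\end{equation*}
Finally I would pass to the double-logarithmic scale. For fixed $t>0$ the factor $\Gamma(\beta)t^\beta$ is a fixed positive constant, so setting $\theta=\kappa$ and viewing the argument as $\theta\cdot(\Gamma(\beta)t^\beta)$, the lower bound in Proposition \ref{boundMLf} gives $\liminf_{\kappa\to\infty}\log\log E_\beta(\kappa\Gamma(\beta)t^\beta)/\log\kappa\geq 1/\beta$. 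Since $\log\log$ is increasing on the relevant range, the inequality $f(t)\geq c_2 E_\beta(\kappa\Gamma(\beta)t^\beta)$ transfers directly to $\liminf$, and taking $\liminf_{\kappa\to\infty}$ of $\log\log f(t)/\log\kappa$ produces the claimed bound $1/\beta$.

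The only delicate point is the multiplicative constant $c_2$ in the final step, and it is harmless: because $E_\beta(\kappa\Gamma(\beta)t^\beta)\to\infty$ as $\kappa\to\infty$, the quantity $\log\log\bigl(c_2 E_\beta(\cdots)\bigr)$ differs from $\log\log E_\beta(\cdots)$ by a term tending to $0$, which is annihilated after division by $\log\kappa$. Thus $c_2$ does not affect the limit, and the proof is complete.
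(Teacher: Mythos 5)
Your proof is correct and follows essentially the same route as the paper's own (very terse) proof: iterate $f\geq c_2+\sA f$ to get $f(t)\geq c_2\sum_{k=0}^{n-1}(\sA^k 1)(t)+(\sA^n f)(t)$, identify the sum with $c_2E_\beta(\kappa\Gamma(\beta)t^\beta)$, and apply Proposition \ref{boundMLf}. The paper leaves the details "to the reader," and you supply precisely the right ones — monotonicity of $\sA$ to justify the iteration, nonnegativity of $f$ to discard the remainder $(\sA^n f)(t)\geq 0$ (the step that makes this direction easier than Proposition \ref{prop:upper-renew}), and the observation that the constant $c_2$ is washed out at the $\log\log$ scale.
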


\begin{proof}
With the notations introduced in the proof of Proposition \ref{prop:upper-renew}, \eqref{renew:lower} yields
\begin{equation}
f(t)\geq c_2\sum_{k=0}^{n-1}(\sA^k1)(t)+(\sA^n f)(t).
\end{equation}
Now similar arguments as in Proposition \ref{prop:upper-renew} prove the result. We leave it to the reader to fill in the details.
\end{proof}

 The above inequalities are well studied; see for instance \cite{Henry}.  But the novelty here is that, as opposed to what is usually done, instead of $t$, we take $\kappa$ to be large.

\section{Proofs of Theorem \ref{white} and Corollary \ref{cor:white}}
We will begin with the proof of Theorem \ref{white}.  We will prove it in two steps.  Set
\begin{equation}\label{sup:sol}
\sS_t(\lambda):=\sup_{x\in B(0,\,R)}\E|u_t(x)|^2.
\end{equation}
We then have the following proposition.

\begin{proposition}
Fix $t>0$, then
\begin{equation*}
\limsup_{\lambda\rightarrow \infty}\frac{\log \log \sS_t(\lambda)}{\log \lambda}\leq \frac{2\alpha}{\alpha-1}.
\end{equation*}
\end{proposition}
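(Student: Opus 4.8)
The plan is to turn the second-moment recursion induced by \eqref{mild:dirichlet} into a renewal inequality of the form \eqref{renew:upper} and then invoke Proposition \ref{prop:upper-renew}. First I would apply Walsh's isometry to the mild formulation. Since the stochastic integral against space-time white noise has mean zero and orthogonal increments, squaring and taking expectations gives
\begin{equation*}
\E|u_t(x)|^2 = |(\sG_D u)_t(x)|^2 + \lambda^2 \int_0^t\int_{B(0,\,R)} p_D^2(t-s,\,x,\,y)\,\E|\sigma(u_s(y))|^2\,\d y\,\d s.
\end{equation*}
The deterministic term $(\sG_D u)_t(x)$ is bounded uniformly in $x$ and $t$ because $u_0$ is bounded and $p_D$ is a subprobability kernel; call that bound $c_1$. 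The linear growth $|\sigma(z)|\leq L_\sigma|z|$ bounds the integrand by $L_\sigma^2\,\E|u_s(y)|^2 \leq L_\sigma^2\,\sS_s(\lambda)$.

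Second, I would control the spatial $L^2$-norm of the Dirichlet kernel. Using $p_D \leq p$ as in \eqref{heat:upper}, together with the scaling $p(\tau,\,x,\,y)=\tau^{-1/\alpha}P_1(\tau^{-1/\alpha}(x-y))$ of the stable density (recall $d=1$ here), I obtain
\begin{equation*}
\sup_{x\in B(0,\,R)}\int_{B(0,\,R)} p_D^2(\tau,\,x,\,y)\,\d y \leq \int_{\R} p^2(\tau,\,x,\,y)\,\d y = c\,\tau^{-1/\alpha},
\end{equation*}
where finiteness of $\int_{\R} P_1^2$ uses the tail decay $P_1(z)\asymp |z|^{-(1+\alpha)}$ and forces $\alpha>1$, exactly the regime in which the target exponent $\frac{2\alpha}{\alpha-1}$ is meaningful. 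Taking $\sup_x$ in the moment identity then yields
\begin{equation*}
\sS_t(\lambda) \leq c_1 + c\,\lambda^2 L_\sigma^2 \int_0^t (t-s)^{-1/\alpha}\,\sS_s(\lambda)\,\d s.
\end{equation*}

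Third, this is precisely \eqref{renew:upper} with $\beta = 1-\tfrac1\alpha = \frac{\alpha-1}{\alpha}$ and $\kappa = c\,\lambda^2 L_\sigma^2$. Proposition \ref{prop:upper-renew} then gives $\limsup_{\kappa\to\infty}\frac{\log\log\sS_t(\lambda)}{\log\kappa}\leq \frac1\beta = \frac{\alpha}{\alpha-1}$. Since $\log\kappa = 2\log\lambda + O(1)$ as $\lambda\to\infty$, multiplying through by $\frac{\log\kappa}{\log\lambda}\to 2$ converts this into the claimed bound $\frac{2\alpha}{\alpha-1}$.

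The main obstacle I anticipate is the second step: securing the sharp $\tau^{-1/\alpha}$ decay of $\int p_D^2\,\d y$ uniformly in $x$. For this upper estimate the crude bound $p_D\le p$ suffices, but one must verify the scaling computation and the integrability of $P_1^2$ with care, and note that the local integrability of $(t-s)^{-1/\alpha}$ needed to apply the renewal proposition is exactly the requirement $\alpha>1$. Everything else—the isometry, the linear growth bound, and bookkeeping the factor $2$ coming from $\kappa=c\,\lambda^2$—is routine.
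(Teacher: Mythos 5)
Your proof is correct and follows essentially the same route as the paper: Walsh's isometry, a bound $I_1\le c_1$, the kernel estimate $\int_{B(0,\,R)}p_D^2(\tau,\,x,\,y)\,\d y\le c\,\tau^{-1/\alpha}$, and then Proposition \ref{prop:upper-renew} with $\kappa\asymp\lambda^2$; the only (inessential) difference is that the paper obtains the kernel estimate from the semigroup identity $\int_{B(0,\,R)}p_D^2(\tau,\,x,\,y)\,\d y=p_D(2\tau,\,x,\,x)$ together with \eqref{heat:upper}, while you use $p_D\le p$ and scaling of the free stable density, which is an equivalent computation. One small correction: finiteness of $\int_{\R}P_1^2$ does not force $\alpha>1$ (the tails decay like $|z|^{-2(1+\alpha)}$, which is integrable for every $\alpha>0$); the constraint $\alpha>1$ enters only where you note it at the end, namely to make $\beta=1-\tfrac{1}{\alpha}$ positive, so that $(t-s)^{-1/\alpha}$ is locally integrable and Proposition \ref{prop:upper-renew} applies.
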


\begin{proof}
We start off with the representation \eqref{mild:dirichlet} and take the second moment to obtain
\begin{equation}\label{mild:white}
\begin{aligned}
\E|u_t(x)|^2&=|(\sG_Du)_t(x)|^2+\lambda^2\int_0^t\int_{B(0,\,R)}p^2_D(t-s,\,x,\,y)\E|\sigma(u_s(y))|^2\d y\d s\\
&=I_1+I_2.
\end{aligned}
\end{equation}
Clearly, for any fixed $t>0$, $I_1\leq c_1$ where $c_1$ is a constant depending on $t$.   We now focus our attention on $I_2$. The Lipschitz property of $\sigma$ together with the Markov property of killed stable processes yield the following

\begin{equation*}
\begin{aligned}
I_2&\leq (\lambda L_\sigma)^2\int_0^t\int_{B(0,\,R)}p^2_D(t-s,\,x,\,y)\E|u_s(y)|^2\d y\d s\\
&\leq (\lambda L_\sigma)^2\int_0^t\sS_s(\lambda)\int_{B(0,\,R)}p^2_D(t-s,\,x,\,y)\d y\d s\\
&\leq (\lambda L_\sigma)^2\int_0^t\sS_s(\lambda) p_D(2(t-s),\,x,\,x)\d s\\
&\leq c_2\lambda^2\int_0^t\frac{\sS_s(\lambda)}{(t-s)^{1/\alpha}}\d s.
\end{aligned}
\end{equation*}
Putting these estimates together, we have

\begin{equation*}
\sS_t(\lambda)\leq c_1+c_2\lambda^2\int_0^t\frac{\sS_s(\lambda)}{(t-s)^{1/\alpha}}\d s.
\end{equation*}
Now application of Proposition \ref{prop:upper-renew} proves the result.
\end{proof}

For any fixed $\ve>0$, set

\begin{equation*}
\sI_{\ve, t}(\lambda):=\inf_{x\in B(0,\,R-\ve)}\E|u_t(x)|^2.
\end{equation*}

\begin{proposition}
For any fixed $\ve>0$, there exists a $t_0>0$ such that for all $t\leq t_0$,

\begin{equation*}
\liminf_{\lambda\rightarrow \infty}\frac{\log \log \sI_{\ve, t}(\lambda)}{\log \lambda}\geq \frac{2\alpha}{\alpha-1}.
\end{equation*}
\end{proposition}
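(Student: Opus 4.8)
The plan is to derive a \emph{lower} renewal inequality for $\sI_{\ve,t}(\lambda)$ that mirrors the upper-bound proof and then invoke Proposition~\ref{prop:lower-renew}. Starting from the second-moment identity \eqref{mild:white}, I would retain the deterministic term and bound the stochastic term from below. For the deterministic term, Remark~\ref{deterministic} gives $|(\sG_Du)_t(x)|^2 \ge g_t^2 > 0$ uniformly for $x \in B(0,R-\ve)$, with $g_t$ independent of $\lambda$. For the noise term, the lower Lipschitz bound $|\sigma(z)| \ge l_\sigma |z|$ gives $\E|\sigma(u_s(y))|^2 \ge l_\sigma^2 \E|u_s(y)|^2$, so that
\[
I_2 \ge (\lambda l_\sigma)^2 \int_0^t \int_{B(0,R)} p_D^2(t-s,x,y)\, \E|u_s(y)|^2 \, \d y \, \d s.
\]

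Next I would restrict the spatial integral to the set $A_s := \{ y \in B(0,R-\ve) : |x-y| \le (t-s)^{1/\alpha}\}$, on which both the lower heat-kernel estimate \eqref{heat:lower} of Proposition~\ref{lower} applies (yielding $p_D(t-s,x,y) \ge c\,(t-s)^{-1/\alpha}$, since $d=1$) and the bound $\E|u_s(y)|^2 \ge \sI_{\ve,s}(\lambda)$ holds. The key geometric observation, valid in $d=1$, is that for $t \le t_0$ with $t_0$ chosen so that $t_0^{1/\alpha} \le R-\ve$, the one-dimensional Lebesgue measure of $A_s$ is at least $(t-s)^{1/\alpha}$ uniformly in $x \in B(0,R-\ve)$ (one keeps the half of the interval pointing toward the origin). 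Combining these and using $d=1$,
\[
I_2 \ge c\,\lambda^2 \int_0^t \frac{(t-s)^{1/\alpha}}{(t-s)^{2/\alpha}}\, \sI_{\ve,s}(\lambda)\, \d s = c\,\lambda^2 \int_0^t \frac{\sI_{\ve,s}(\lambda)}{(t-s)^{1/\alpha}}\, \d s,
\]
and taking the infimum over $x \in B(0,R-\ve)$ produces
\[
\sI_{\ve,t}(\lambda) \ge g_t^2 + c\,\lambda^2 \int_0^t (t-s)^{\beta - 1}\, \sI_{\ve,s}(\lambda) \, \d s, \qquad \beta := \frac{\alpha-1}{\alpha}.
\]

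This is exactly inequality \eqref{renew:lower} with $\kappa = c\,\lambda^2$ and $c_2 = g_t^2$, so Proposition~\ref{prop:lower-renew} gives $\liminf_{\kappa\to\infty} (\log\log \sI_{\ve,t}(\lambda))/\log\kappa \ge 1/\beta = \alpha/(\alpha-1)$. Since $\log\kappa = 2\log\lambda + O(1)$, so that $\log\kappa/\log\lambda \to 2$, the change of variable $\kappa = c\lambda^2$ converts this into $\liminf_{\lambda\to\infty}(\log\log\sI_{\ve,t}(\lambda))/\log\lambda \ge 2/\beta = 2\alpha/(\alpha-1)$, as claimed.

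I expect the main obstacle to be the step producing a clean renewal inequality: one must simultaneously exploit the lower heat-kernel bound and control the measure of $A_s$ uniformly in $x$, so that the resulting constant $c$ is independent of both $x$ and $\lambda$. The restriction $d=1$ is essential here, both for the integrability condition \eqref{moments} and for the measure estimate $|A_s| \ge (t-s)^{1/\alpha}$ to balance the factor $(t-s)^{-2/\alpha}$ coming from $p_D^2$ down to the renewal-admissible exponent $(t-s)^{\beta-1}$ with $\beta = (\alpha-1)/\alpha > 0$. A secondary point needing care is that $g_t^2$ is a genuinely positive constant independent of $\lambda$, which is precisely what Remark~\ref{deterministic} guarantees.
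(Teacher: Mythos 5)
Your proof is correct and is essentially the same as the paper's: the same decomposition into $I_1 \ge g_{t_0}^2$ and $I_2$, the same restriction to the set $A_s = \{y \in B(0,R-\ve) : |x-y|\le (t-s)^{1/\alpha}\}$ with $|A_s|\ge c\,(t-s)^{1/\alpha}$, the same use of Proposition~\ref{lower} to get $p_D \ge c\,(t-s)^{-1/\alpha}$, and the same reduction to the renewal inequality of Proposition~\ref{prop:lower-renew}. Your explicit bookkeeping of the change of variable $\kappa = c\lambda^2$ (yielding the factor $2$ in $2\alpha/(\alpha-1)$) is a detail the paper leaves implicit, but the argument is the paper's.
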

\begin{proof}
As in the proof of the previous proposition we start off with \eqref{mild:white} and seek to find lower bound on each of the terms. We fix $\ve>0$ and choose $t_0$ as in Proposition \ref{lower}. For $x\in B(0,\,R-\ve)$, we have $\sG_D(t,\,x)\geq g_{t_0}$. Hence $I_1\geq g_{t_0}^2$.  We now turn our attention to $I_2$.

\begin{equation*}
\begin{aligned}
I_2&\geq (\lambda l_\sigma)^2\int_0^t\int_{B(0,\,R)}p^2_D(t-s,\,x,\,y)\E|u_s(y)|^2\d y\d s\\
&\geq (\lambda l_\sigma)^2\int_0^t\sI_{\ve, s}(\lambda)\int_{B(0,\,R-\ve)}p^2_D(t-s,\,x,\,y)\d y\d s\\
\end{aligned}\\
\end{equation*}
Set $A:=\{y\in B(0,\,R-\ve);|x-y|\leq (t-s)^{1/\alpha}\}$. Since $t-s\leq t_0$, we have $|A|\geq c_1(t-s)^{1/\alpha}$. Now using Proposition \ref{lower}, we have
\begin{equation*}
\begin{aligned}
\int_{B(0,\,R-\ve)}p^2_D(t-s,\,x,\,y)\d y& \geq c_2\int_{A}\frac{1}{(t-s)^{2/\alpha}}\d y\\
&=c_3\frac{1}{(t-s)^{1/\alpha}}.
\end{aligned}
\end{equation*}
We thus have
\begin{equation*}
I_2\geq c_4\lambda^2\int_0^t\frac{\sI_{\ve, s}(\lambda)}{(t-s)^{1/\alpha}}\d s.
\end{equation*}
Combining the above estimates, we have

\begin{equation*}
\sI_{\ve, t}\geq g_{t_0}^2+c_4\lambda^2\int_0^t\frac{\sI_{\ve, s}(\lambda)}{(t-s)^{1/\alpha}}\d s.
\end{equation*}
We now apply Proposition \ref{prop:lower-renew} to obtain the result.
\end{proof}
{\it Proof of Theorem \ref{white}.}
The proof of the result when $t\leq t_0$ follows easily from the above two propositions. To prove the theorem for all $t>0$, we only need to prove the above proposition for all $t>0$.  For any fixed $T,\,t>0$, by changing the variable we have
\begin{equation*}
\begin{aligned}
\E|&u_{T+t}(x)|^2\\
&=|(\sG_Du)_{T+t}(x)|^2+\lambda^2\int_0^{T+t}\int_{B(0,\,R)}p^2_D(T+t-s,\,x,\,y)\E|\sigma(u_s(y))|^2\d y\d s\\
&=|(\sG_Du)_{T+t}(x)|^2+\lambda^2\int_0^{T}\int_{B(0,\,R)}p^2_D(T+t-s,\,x,\,y)\E|\sigma(u_s(y))|^2\d y\d s\\
&+\lambda^2\int_0^{t}\int_{B(0,\,R)}p^2_D(t-s,\,x,\,y)\E|\sigma(u_{T+s}(y))|^2\d y\d s.\\
\end{aligned}
\end{equation*}
This gives us
\begin{equation*}
\E|u_{T+t}(x)|^2\geq|(\sG_Du)_{T+t}(x)|^2+ \lambda^2l_\sigma^2\int_0^{t}\int_{B(0,\,R)}p^2_D(t-s,\,x,\,y)\E|u_{T+s}(y)|^2\d y\d s.
\end{equation*}
Since $|(\sG_Du)_{T+t}(x)|^2$ is strictly positive, we can use the proof of the above proposition with an obvious modification to conclude that

\begin{equation*}
\limsup_{\lambda\rightarrow \infty}\frac{\log\log \E|u_{T+t}(x)|^2}{\log \lambda}\geq \frac{2\alpha}{\alpha-1},
\end{equation*}
for $x\in B(0,\,R-\ve)$ and small $t$.

\qed

{\it Proof of Corollary \ref{cor:white}.}  Note that
\begin{equation*}
\int_{-R}^R\E|u_t(x)|^2\d x\leq 2R\sup_{x\in [-R,\,R]}\E|u_t(x)|^2
\end{equation*}
and
\begin{equation*}
\int_{-R}^R\E|u_t(x)|^2\d x\geq 2(R-\ve)\inf_{x\in [-(R-\ve),\,R-\ve]}\E|u_t(x)|^2.
\end{equation*}

We now apply Theorem \ref{white} and use the definition of $\sE_t(\lambda)$ to obtain the result. \qed

\section{Proofs of Theorem \ref{coloured} and Corollary \ref{cor:coloured}}
Recall that
\begin{equation*}
\sS_t(\lambda):=\sup_{x\in B(0,\,R)}\E|u_t(x)|^2,
\end{equation*}
where here and throughout the rest of this section, $u_t$ will denote the solution to the \eqref{eq:dirichlet:colored}.
The following lemma will be crucial later. In what follows $f$ denotes the spatial correlation of the noise $\dot{F}$.

\begin{lemma}\label{integral-upper}
For all $x,y\in B(0,\,R)$,
\begin{equation}\label{int-upper}
\iint_{B(0,\,R)\times B(0,\,R)}p_D(t,\,x,\,w)p_D(t,\,y,\,z)f(w,z)\d w \d z\leq \frac{c_1}{t^{\beta/\alpha}},
\end{equation}
for some positive constant $c_1$.
\end{lemma}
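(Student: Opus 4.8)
The plan is to bound the double integral by factoring the Riesz kernel and reducing everything to the diagonal heat kernel estimate \eqref{heat:upper}. The key observation is that the left-hand side of \eqref{int-upper} is a spatial convolution-type quantity, and the only tool we really have is the universal upper bound $p_D(t,\,x,\,y)\leq c_1 t^{-d/\alpha}$ together with the fact that $p_D$ integrates to at most $1$ in its last variable (it is a sub-probability density, being dominated by $p$). So the strategy is: use the Riesz kernel's scaling to perform a change of variables that extracts the correct power of $t$, and handle the singularity $|w-z|^{-\beta}$ using the constraint $\beta<\alpha$ (and hence $\beta<d$, since $d=1$ is not assumed in this section).

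First I would replace both heat kernels by their pointwise upper bounds. Writing $P(t):=\iint p_D(t,x,w)p_D(t,y,z)|w-z|^{-\beta}\,\d w\,\d z$, I would split the region of integration according to whether $|w-z|$ is smaller or larger than $t^{1/\alpha}$. On the near-diagonal region $\{|w-z|\leq t^{1/\alpha}\}$, I would bound $p_D(t,y,z)\leq c\,t^{-d/\alpha}$ and integrate the singular kernel $|w-z|^{-\beta}$ over a ball of radius $t^{1/\alpha}$; since $\beta<d$ this integral is finite and equals $c\,t^{(d-\beta)/\alpha}$, after which the remaining integral $\int p_D(t,x,w)\,\d w\leq 1$ leaves exactly $c\,t^{-d/\alpha}\cdot t^{(d-\beta)/\alpha}=c\,t^{-\beta/\alpha}$. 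On the far region $\{|w-z|>t^{1/\alpha}\}$, I would bound the Riesz kernel by $|w-z|^{-\beta}\leq t^{-\beta/\alpha}$ and then use $\int\!\!\int p_D(t,x,w)p_D(t,y,z)\,\d w\,\d z\leq 1$ to get the same order $c\,t^{-\beta/\alpha}$. Combining the two regions yields the claimed bound.

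The main obstacle will be justifying that all constants are uniform over $x,y\in B(0,R)$ and that the near-diagonal integral of the Riesz kernel behaves as expected independently of where $w$ sits; this is clean because the estimate $\int_{\{|w-z|\leq r\}}|w-z|^{-\beta}\,\d z \leq c\,r^{d-\beta}$ holds uniformly in $w$ precisely when $\beta<d$, and the dominating full-space integrals of $p_D$ are uniformly bounded by $1$. I should be careful that the dimension-dependence is handled correctly (the power $t^{-\beta/\alpha}$ must come out clean regardless of $d$), and that the condition $\beta<\alpha$—which is what guarantees the exponent $-\beta/\alpha$ is the genuinely singular one and makes the later renewal argument run—is not actually needed here, only $\beta<d$. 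The whole computation is routine once the split is set up, so I expect no serious difficulty beyond the uniformity bookkeeping.
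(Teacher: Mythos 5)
Your proof is correct, but it takes a genuinely different route from the paper's. The paper's argument bounds $p_D\leq p$ (the free stable kernel), then uses translation invariance and the Chapman--Kolmogorov identity to collapse the double integral into the single integral $\int p(2t,\,w,\,x-y)f(w,\,0)\,\d w$, and finally extracts $t^{-\beta/\alpha}$ from the exact scaling property of the $\alpha$-stable kernel via a change of variables. Your argument never touches this convolution structure: you split at the scale $|w-z|\lessgtr t^{1/\alpha}$, and on each piece you use only the on-diagonal bound \eqref{heat:upper}, the sub-probability property $\int_{B(0,\,R)}p_D(t,\,x,\,w)\,\d w\leq 1$, and the elementary estimate $\int_{\{|z-w|\leq r\}}|w-z|^{-\beta}\,\d z\leq c\,r^{d-\beta}$ --- which, as you correctly note, requires only $\beta<d$; the condition $\beta<\alpha$ plays no role in this lemma and is needed only later so that the exponent $\beta/\alpha<1$ keeps the renewal inequality of Proposition \ref{prop:upper-renew} integrable. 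What your route buys is robustness: it needs neither translation invariance, nor the semigroup property, nor exact self-similarity of the kernel, so it transfers verbatim to the operators of Section 5 (relativistic stable, censored stable, sums of fractional Laplacians), whose Dirichlet kernels are merely comparable to $t^{-d/\alpha}$ rather than exactly scaling. What the paper's route buys is brevity --- essentially two lines --- at the price of leaning on the special structure of the free $\alpha$-stable kernel. Both arguments give the bound uniformly over $x,\,y\in B(0,\,R)$, so your proposal is a valid substitute for the paper's proof.
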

\begin{proof}
We begin by noting that
\begin{equation*}
\begin{aligned}
\iint_{B(0,\,R)\times B(0,\,R)}&p_D(t,\,x,\,w)p_D(t,\,y,\,z)f(w-z,0)\d w \d z\\
&\leq\iint p(t,\,x,\,w)p(t,\,y,\,z)f(w-z,0)\d w \d z\\
&\leq \int p(2t,\,w,\,x-y) f(w,0) \d w.
\end{aligned}
\end{equation*}
Now the scaling property of the heat kernel and a proper change of variable proves the result.
\end{proof}

\begin{proposition}
Fix $t>0$, then
\begin{equation*}
\limsup_{\lambda\rightarrow \infty}\frac{\log \log \sS_t(\lambda)}{\log \lambda}\leq \frac{2\alpha}{\alpha-\beta}.
\end{equation*}
\end{proposition}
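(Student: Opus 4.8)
The plan is to mirror the white-noise Proposition verbatim, the only essential difference being the second-moment computation for the stochastic integral against the coloured noise $\dot{F}$, which brings in the spatial correlation $f$ and is exactly where Lemma \ref{integral-upper} is meant to be used. First I would start from the mild formulation \eqref{mild:dirichlet:colored} and take second moments. The covariance structure of $\dot{F}$ gives a Walsh-type isometry
\begin{equation*}
\E|u_t(x)|^2 = |(\sG_D u)_t(x)|^2 + \lambda^2 \int_0^t \iint_{B(0,\,R)\times B(0,\,R)} p_D(t-s,\,x,\,w)\,p_D(t-s,\,x,\,z)\,\E[\sigma(u_s(w))\sigma(u_s(z))]\,f(w,z)\,\d w\,\d z\,\d s,
\end{equation*}
call these two terms $I_1$ and $I_2$. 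As in the white-noise case, for fixed $t$ we have $I_1\le c_1$.

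Next I would estimate $I_2$. The off-diagonal moment $\E[\sigma(u_s(w))\sigma(u_s(z))]$ is handled by Cauchy--Schwarz together with the linear-growth bound on $\sigma$:
\begin{equation*}
\E[\sigma(u_s(w))\sigma(u_s(z))] \le L_\sigma^2 \sqrt{\E|u_s(w)|^2}\,\sqrt{\E|u_s(z)|^2} \le L_\sigma^2\, \sS_s(\lambda).
\end{equation*}
Pulling $\sS_s(\lambda)$ out of the spatial double integral leaves precisely the quantity bounded in Lemma \ref{integral-upper} (with $x=y$), namely
\begin{equation*}
\iint_{B(0,\,R)\times B(0,\,R)} p_D(t-s,\,x,\,w)\,p_D(t-s,\,x,\,z)\,f(w,z)\,\d w\,\d z \le \frac{c_2}{(t-s)^{\beta/\alpha}}.
\end{equation*}
Combining these bounds and taking the supremum over $x\in B(0,\,R)$ yields the renewal-type inequality
\begin{equation*}
\sS_t(\lambda) \le c_1 + c_3 \lambda^2 \int_0^t \frac{\sS_s(\lambda)}{(t-s)^{\beta/\alpha}}\,\d s.
\end{equation*}

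Finally I would invoke Proposition \ref{prop:upper-renew}. The kernel exponent here is $(t-s)^{-\beta/\alpha}$, so the relevant renewal parameter is $1-\beta/\alpha = (\alpha-\beta)/\alpha$ and the coefficient is $\kappa = c_3\lambda^2$. Proposition \ref{prop:upper-renew} then gives $\limsup_{\kappa\to\infty}(\log\log \sS_t(\lambda))/\log\kappa \le \alpha/(\alpha-\beta)$. Since $\log\kappa/\log\lambda \to 2$ as $\lambda\to\infty$, the factor $2$ enters and we recover the claimed bound $2\alpha/(\alpha-\beta)$.

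I do not anticipate a serious obstacle, since the logical structure is identical to the white-noise Proposition. The one genuinely new ingredient is Lemma \ref{integral-upper}, whose whole purpose is to absorb the spatial correlation and supply the $(t-s)^{-\beta/\alpha}$ decay that replaces the $(t-s)^{-1/\alpha}$ decay from the diagonal heat-kernel bound $p_D(2(t-s),\,x,\,x)$ used in the white-noise proof. The only points needing care are writing down the correct isometry for the coloured stochastic integral (the double spatial integral weighted by $f$) and the Cauchy--Schwarz step that reduces the off-diagonal moment $\E[\sigma(u_s(w))\sigma(u_s(z))]$ to $\sS_s(\lambda)$, so that the integral inequality is genuinely of the form \eqref{renew:upper}.
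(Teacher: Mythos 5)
Your proposal is correct and follows essentially the same route as the paper: the same decomposition into $I_1+I_2$ via the coloured-noise isometry, the same H\"older/Cauchy--Schwarz reduction of $\E[\sigma(u_s(w))\sigma(u_s(z))]$ to $L_\sigma^2\,\sS_s(\lambda)$, the same use of Lemma \ref{integral-upper} to produce the $(t-s)^{-\beta/\alpha}$ kernel, and the same application of Proposition \ref{prop:upper-renew} with renewal parameter $(\alpha-\beta)/\alpha$ and $\kappa\asymp\lambda^2$. Your explicit bookkeeping of how the factor $2$ arises from $\log\kappa/\log\lambda\to 2$ is a detail the paper leaves implicit, but the argument is the same.
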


\begin{proof}
We start with the mild formulation to the solution to \eqref{eq:dirichlet:colored} which after taking the second moment gives us
\begin{equation*}
\begin{aligned}
\E|&u(t,\,x)|^2=|(\sG_Du)_t(x)|^2\\
&+\lambda^2\int_0^t\int_{B(0,\,R)\times B(0,\,R)}p_D(t-s,x,y)p_D(t-s,x,z)f(y,z)\E[\sigma(u_s(y))\sigma(u_s(z))]\d y\d z \d s\\
&=I_1+I_2.
\end{aligned}
\end{equation*}
We obviously have $I_1\leq c_1$.  Note that the Lipschitz assumption on $\sigma$ together with H\"older's inequality give
\begin{equation*}
\begin{aligned}
\E[\sigma(u_s(y))\sigma(u_s(z))]&\leq L_\sigma^2 [\E|u_s(y)|^2]^{1/2}[\E|u_s(z)|^2]^{1/2}\\
&\leq L_\sigma^2S_s(\lambda).
\end{aligned}
\end{equation*}
We can use the above inequality and Lemma \ref{integral-upper} to bound $I_2$ as follows.
\begin{equation*}
\begin{aligned}
I_2\leq (\lambda L_\sigma)^2\int_0^t\frac{\sS_s(\lambda)}{(t-s)^{\beta/\alpha}}\d s.
\end{aligned}
\end{equation*}
Combining the above estimates, we obtain
\begin{equation*}
\sS_t(\lambda)\leq c_1+c_2\lambda^2\int_0^t\frac{\sS_s(\lambda)}{(t-s)^{\beta/\alpha}}\d s,
\end{equation*}
which immediately yields the result upon an application of Proposition \ref{prop:upper-renew}.
\end{proof}

We have the following lower bound on the second of the solution. Inspired by the localisation arguments of \cite{Khoshnevisan:2013ab}, we have the following.
\begin{proposition}\label{prop:recur}
Fix $\ve>0$. Then for all $x\in B(0,\,R-2\ve)$ and $t\leq t_0$,
\begin{equation*}
\begin{aligned}
\E|u_t&(x)|^2\\
&\geq g_t^2+g_t^2\sum_{k=1}^\infty (\lambda l_\sigma c_1)^{2k}\left(\frac{t}{k} \right)^{k(\alpha-\beta)/\alpha},
\end{aligned}
\end{equation*}
where $c_1$ is some positive constant depending on $\alpha$ and $\beta$.
\end{proposition}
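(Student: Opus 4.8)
The plan is to iterate the second-moment identity for \eqref{mild:dirichlet:colored}, using positivity of the solution to turn the nonlinearity into a usable lower bound and a localisation of the spatial integral to recover a tractable renewal kernel. Squaring the mild formulation as in the previous proposition gives $\E|u_t(x)|^2 = I_1 + I_2$, where $I_1 = |(\sG_Du)_t(x)|^2 \geq g_t^2$ for $x \in B(0,\,R-2\ve)$ by Remark \ref{deterministic}. Since $u_0\geq 0$, the solution satisfies $u_s\geq 0$ a.s., and the hypothesis $|\sigma(w)|\geq l_\sigma|w|$ together with $\sigma(0)=0$ forces $\sigma(w)\geq l_\sigma w$ for $w\geq 0$; hence $\E[\sigma(u_s(y))\sigma(u_s(z))] \geq l_\sigma^2\,\E[u_s(y)u_s(z)] \geq 0$. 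This reduces everything to producing a lower bound for the two-point correlation that can be fed back into $I_2$.

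First I would localise. Because $t\leq t_0 = \ve^\alpha$, restricting the $(y,z)$-integration to the diagonal region $\{|x-y|\leq (t-s)^{1/\alpha},\ |x-z|\leq (t-s)^{1/\alpha}\}$ keeps $y,z\in B(0,\,R-\ve)$, so Proposition \ref{lower} applies and yields $p_D(t-s,\,x,\,y)\geq c(t-s)^{-d/\alpha}$ and likewise for $z$, while $|y-z|\leq 2(t-s)^{1/\alpha}$ gives $f(y,z)=|y-z|^{-\beta}\geq c(t-s)^{-\beta/\alpha}$. The two balls of radius $(t-s)^{1/\alpha}$ contribute a volume factor of order $(t-s)^{2d/\alpha}$, which cancels the $(t-s)^{-2d/\alpha}$ from the two heat kernels and leaves exactly the renewal kernel $(t-s)^{-\beta/\alpha}$. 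Thus, writing $m_s$ for an appropriate infimum of $\E[u_s(y)u_s(z)]$ over close pairs in $B(0,\,R-\ve)$, I obtain a recursive inequality of the shape $\E|u_t(x)|^2 \geq g_t^2 + c(\lambda l_\sigma)^2\int_0^t (t-s)^{-\beta/\alpha} m_s\,\d s$, and the same localisation applied to the two-point mild formulation produces a closed recursion for $m_t$ itself.

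The series then emerges by iterating this recursion from the constant lower bound $g_t^2$. Iterating the operator $\psi\mapsto c(\lambda l_\sigma)^2\int_0^t(t-s)^{-\beta/\alpha}\psi(s)\,\d s$ exactly $k$ times, as in the computation behind Proposition \ref{prop:lower-renew}, produces a term proportional to $g_t^2\,(c\lambda^2 l_\sigma^2)^k\, t^{k\gamma}/\Gamma(k\gamma+1)$ with $\gamma=(\alpha-\beta)/\alpha$ (note $(t-s)^{-\beta/\alpha}=(t-s)^{\gamma-1}$). The Stirling-type estimate $\Gamma(k\gamma+1)^{-1}t^{k\gamma}\asymp c^k(t/k)^{k\gamma}$ recasts this as $g_t^2(\lambda l_\sigma c_1)^{2k}(t/k)^{k(\alpha-\beta)/\alpha}$, which is precisely the stated summand and is tailored so that Lemma \ref{sum} (with $\rho=(\alpha-\beta)/\alpha$) can be applied in the next step to extract the excitation index $\frac{2\alpha}{\alpha-\beta}$.

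I expect the genuine difficulty to lie not in these computations but in the localisation bookkeeping, which is the nontrivial modification of the argument of \cite{Khoshnevisan:2013ab} alluded to in the introduction. The subtlety is that a clean recursion for $m_t$ requires the two-point lower bound to hold with uniform constants for base points throughout a ball, whereas each localisation step moves the base points by up to $(t-s)^{1/\alpha}\leq\ve$ and only controls the integrand on $B(0,\,R-\ve)$. Keeping all relevant points inside the nested pair $B(0,\,R-2\ve)\subset B(0,\,R-\ve)$ at every stage of the infinite iteration, so that both Proposition \ref{lower} and the inductive two-point bound stay simultaneously valid with constants that do not degrade as $k$ grows, is the main obstacle, and it is exactly what dictates the buffer $2\ve$ in the statement. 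Once this uniformity is secured, an induction on the number of iterations delivers every partial sum and hence the proposition.
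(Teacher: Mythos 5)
Your setup matches the paper's proof: the decomposition of $\E|u_t(x)|^2$, the bound $I_1\geq g_t^2$, and the sign argument giving $\E[\sigma(u_s(y))\sigma(u_s(z))]\geq l_\sigma^2\,\E|u_s(y)u_s(z)|$ (which the paper uses implicitly when it writes $\E|u_{s_1}(z_1)u_{s_1}(z_1')|$) are all fine, and your closing Stirling manipulation would be harmless. The gap is exactly at the step you pass over in one sentence: the claim that ``the same localisation applied to the two-point mild formulation produces a closed recursion for $m_t$ itself'' with the singular kernel $(t-s)^{-\beta/\alpha}$. No such closed recursion exists, and the obstruction is not the $\ve$-buffer bookkeeping you identify. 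To extract $f(v,v')\gtrsim (s-r)^{-\beta/\alpha}$ at a given level you need $|v-v'|\lesssim (s-r)^{1/\alpha}$, but localisation only gives $|v-v'|\leq |w-w'|+2(s-r)^{1/\alpha}$, where $(w,w')$ is the pair inherited from the level above; and the class defining $m_s$ must contain pairs with separation of order $(t-s)^{1/\alpha}$ for the top-level step to apply. These two requirements are incompatible: the kernel bound forces the admissible separation at each level to be dominated by the current time gap, while closure of the recursion forces it to grow by $2(s-r)^{1/\alpha}$ per level. By concavity of $x\mapsto x^{1/\alpha}$ these increments accumulate: over $k$ levels with gaps of order $t/k$ the separations reach order $k^{1-1/\alpha}t^{1/\alpha}$, which dwarfs the gap scale $(t/k)^{1/\alpha}$, whereas $(t-r)^{1/\alpha}\leq(t-s)^{1/\alpha}+(s-r)^{1/\alpha}$ shows the natural choice $h(s)\sim(t-s)^{1/\alpha}$ can never satisfy the nesting $h(s)+2(s-r)^{1/\alpha}\leq h(r)$. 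Consequently either you take $m_s$ over all pairs in $B(0,\,R-\ve)$, in which case the only kernel you can extract is a constant, the recursion becomes $m_t\geq g_t^2+c\lambda^2\int_0^t m_s\,\d s$, and you obtain growth $\exp(c\lambda^2)$, i.e.\ exponent $2$, strictly smaller than $\frac{2\alpha}{\alpha-\beta}$; or you insist on the singular kernel and the recursion dies after finitely many levels, while the series needs every $k$.

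This is precisely why the paper does something structurally different (the ``significant modification'' advertised in the introduction): it never closes a recursion. It iterates the mild formulation \eqref{mild:dirichlet:colored} indefinitely to obtain the infinite sum of $k$-fold multiple integrals \eqref{recursion}, and only then localises each term \emph{globally}: all $2k$ spatial points $z_i,\,z_i'$ are confined to the single ball $B(x,\,s_1^{1/\alpha}/2)$, whose radius is tied to the one time variable $s_1$ and does not degrade with $i$, so that $f(z_i,z_i')\geq s_1^{-\beta/\alpha}$ holds for every $i$ simultaneously, while the constraints $z_i\in B(z_{i-1},\,s_i^{1/\alpha})$ feed the heat-kernel lower bound of Proposition \ref{lower}. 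The price is paid in the temporal domain (effectively $s_i\leq s_1\leq t/k$), and this is exactly why the $k$-th term comes out as $(\lambda l_\sigma c_1)^{2k}(t/k)^{k(\alpha-\beta)/\alpha}$ rather than the Mittag--Leffler term $(c\lambda^2)^k t^{k\gamma}/\Gamma(k\gamma+1)$ that your renewal iteration predicts. (The $2\ve$ in the statement only ensures $B(x,\,s_1^{1/\alpha}/2)\subset B(0,\,R-\ve)$, since $s_1^{1/\alpha}\leq t_0^{1/\alpha}=\ve$; it is a minor point, not the crux.) In short, your proposal tries to reduce the coloured-noise case to the white-noise renewal scheme of Section 3, and that reduction is unavailable; the whole content of Proposition \ref{prop:recur} is the device that circumvents it.
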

\begin{proof}
Fix $\ve>0$ and for convenience, set $B:=B(0,\,R)$ and $B_\ve:=B(0,\,R-\ve)$.  We will also use the following notation; $B^2:=B\times B$ and $B^2_\ve:=B_\ve\times B_\ve$.  After taking the second moment, the mild formulation of the solution together with the growth condition on $\sigma$ gives us
\begin{equation*}
\begin{aligned}
\E|u_t&(x)|^2\\
&\geq|(\sG_Du)_t(x)|^2+\lambda^2 l_\sigma^2\int_0^{t}\int_{B^2}\\
& p_D(t-s_1,\,x,\,z_1)p_D(t-s_1,\,x,\,z_1')\E|u_{s_1}(z_1)u_{s_1}(z_1')|f(z_1,z_1')\d z_1\d z_1'\d s_1.
\end{aligned}
\end{equation*}
We also have
\begin{equation*}
\begin{aligned}
&\E|u_{s_1}(z_1)u_{s_1}(z_1')|\geq|(\sG_Du)_{s_1}(z_1)(\sG_Du)_{s_1}(z_1)|+\lambda^2l_\sigma^2\int_0^{s_1}\int_{B^2} \\
&p_D(s_1-s_2,\,z_1,\,z_2)p_D(s_1-s_2,\,z_1',\,z_2')\E|u_{s_2}(z_2)u_{s_2}(z_2')|f(z_2, z_2')\d z_2\d z_2'\d s_2.
\end{aligned}
\end{equation*}
The above two inequalities thus give us
\begin{equation}
\begin{aligned}
\E|u_t&(x)|^2\\
&\geq|(\sG_Du)_t(x)|^2+\lambda^2 l_\sigma^2\int_0^{t}\int_{B^2}\\
& p_D(t-s_1,\,x,\,z_1)p_D(t-s_1,\,x,\,z_1')\E|u_{s_1}(z_1)u_{s_1}(z_1')|f(z_1,z_1')\d z_1\d z_1'\d s_1\\
&\geq |(\sG_Du)_t(x)|^2+\lambda^2l_\sigma^2\int_0^{t}\int_{B^2}\\
&p_D(t-s_1,\,x,\,z_1)p_D(t-s_1,\,x,\,z_1')f(z_1,z_1')(\sG_Du)_{s_1}(z_1)(\sG_Du)_{s_1}(z_1')\d z_1\d z_1'\d s_1\\
&+(\lambda l_\sigma)^4\int_0^{t}\int_{B^2}p(t-s_1,\,x,\,z_1)p(t-s_1,\,x,\,z_1')f(z_1,z_1')\int_0^{s_1}\int_{B^2}\\
&p_D(s_1-s_2,\,z_1,\,z_2)p_D(s_1-s_2,\,z_1,'\,z_2')
\E|u_{s_2}(z_2)u_{s_2}(z_2')|f(z_2, z_2')\d z_2\d z_2'\d s_2\d z_1\d z_1'\d s_1.
\end{aligned}
\end{equation}

We set $z_0=z_0':=x$ and $s_0:=t$ and continue the recursion as above to obtain
\begin{equation}\label{recursion}
\begin{aligned}
\E|u_t&(x)|^2\\
&\geq|(\sG_Du)_t(x)|^2\\
&+ \sum_{k=1}^\infty (\lambda l_\sigma)^{2k}\int_0^t\int_{B^2}\int_0^{s_1}\int_{B^2}\cdots \int_0^{s_{k-1}}\int_{B^2} |(\sG_Du)_{s_k}(z_k)(\sG_Du_{s_k})(z_k')|\\
&\prod_{i=1}^kp_D(s_{i-1}-s_{i}, z_{i-1},\,z_i)p_D(s_{i-1}-s_{i}, z'_{i-1},\,z'_i)f(z_i, z_i') \d z_i\d z_i'\d s_i.
\end{aligned}
\end{equation}
Therefore,
\begin{equation*}
\begin{aligned}
\E|u_t&(x)|^2\\
&\geq|(\sG_Du)_t(x)|^2\\
&+ \sum_{k=1}^\infty (\lambda l_\sigma)^{2k}\int_0^t\int_{B_\ve^2}\int_0^{s_1}\int_{B_\ve^2}\cdots \int_0^{s_{k-1}}\int_{B_\ve^2} |(\sG_Du)_{s_k}(z_k)(\sG_Du)_{s_k}(z_k')|\\
&\prod_{i=1}^kp_D(s_{i-1}-s_{i}, z_{i-1},\,z_i)p_D(s_{i-1}-s_{i}, z'_{i-1},\,z'_i)f(z_i, z_i') \d z_i\d z_i'\d s_i.
\end{aligned}
\end{equation*}
Using the fact that for $z_k,\,z_k'\in B_\ve$,
\begin{equation*}
\begin{aligned}
(\sG_Du)_{s_k}(z_k)&(\sG_Du)_{s_k}(z_k')\\
&\geq \inf_{x,y\in B_\ve}\inf_{0\leq s\leq t}(\sG_Du)_s(x)(\sG_Du)_s(y)\\
&=g_t^2,
\end{aligned}
\end{equation*}
we obtain

\begin{equation*}
\begin{aligned}
\E|u_t&(x)|^2\\
&\geq g_t^2+g_t^2\sum_{k=1}^\infty (\lambda l_\sigma)^{2k}\int_0^t\int_{B_\ve^2}\int_0^{s_1}\int_{B_\ve^2}\cdots \int_0^{s_{k-1}}\int_{B_\ve^2} \\
&\prod_{i=1}^k p_D(s_{i-1}-s_{i}, z_{i-1},\,z_i)p_D(s_{i-1}-s_{i}, z'_{i-1},\,z'_i)f(z_i, z_i') \d z_i\d z_i'\d s_i.
\end{aligned}
\end{equation*}
We reduce the temporal region of integration as follows.
\begin{equation*}
\begin{aligned}
\E|u_t&(x)|^2\\
&\geq g_t^2+g_t^2\sum_{k=1}^\infty (\lambda l_\sigma)^{2k}\int_{t-t/k}^t\int_{B_\ve^2}\int_{s_1-t/k}^{s_1}\int_{B_\ve^2}\cdots \int_{s_{k-1}-t/k}^{s_{k-1}}\int_{B_\ve^2} \\
&\prod_{i=1}^k p_D(s_{i-1}-s_{i}, z_{i-1},\,z_i)p_D(s_{i-1}-s_{i}, z'_{i-1},\,z'_i)f(z_i, z_i') \d z_i\d z_i'\d s_i.
\end{aligned}
\end{equation*}

Now we make a change the temporal variable, $s_{i-1}-s_{i} \rightarrow s_{i}$, in the following way such that for all integers $i \in [1,k] $, we have 
\begin{equation*}
\begin{aligned}
&\int_{s_{i-1}-t/k}^{s_{i-1}} p_D(s_{i-1}-s_{i}, z_{i-1},\,z_i)p_D(s_{i-1}-s_{i}, z'_{i-1},\,z'_i)f(z_i, z_i') \d s_i \\
&= \int_0^{t/k} p_D(s_{i}, z_{i-1},\,z_i)p_D(s_{i}, z'_{i-1},\,z'_i)f(z_i, z_i') \d s_i.
\end{aligned}
\end{equation*}
We thus have 
\begin{equation*}
\begin{aligned}
\E|u_t&(x)|^2\\
&\geq g_t^2+g_t^2\sum_{k=1}^\infty (\lambda l_\sigma)^{2k}\int_0^{t/k}\int_{B_\ve^2}\int_0^{t/k}\int_{B_\ve^2}\cdots \int_0^{t/k}\int_{B_\ve^2} \\
&\prod_{i=1}^kp_D(s_i, z_{i-1},\,z_i)p_D(s_i, z'_{i-1},\,z'_i)f(z_i, z_i') \d z_i\d z_i'\d s_i.
\end{aligned}
\end{equation*}

We now focus our attention on the multiple integral appearing in the above inequality. We will further restrict its spatial domain of integration so that we have the required lower bound on each component of the following product,

\begin{equation}\label{product}
\prod_{i=1}^kp_D(s_i, z_{i-1},\,z_i)p_D(s_i, z'_{i-1},\,z'_i)f(z_i, z_i').
\end{equation}

Recall that $x\in B(0,\,R-2\ve)$.  For each $i=1,\cdots, k$, choose $z_i$ and $z_i'$ satisfying

\begin{equation*}
z_i\in B(z_0, s_1^{1/\alpha}/2)\cap B(z_{i-1}, s_i^{1/\alpha})
\end{equation*}
and
\begin{equation*}
z_i'\in B(z'_0, s_1^{1/\alpha}/2)\cap B(z'_{i-1}, s_i^{1/\alpha}),
\end{equation*}
so that we have $|z_i-z_i'|\leq s_i^{1/\alpha}/2$ together with $|z_i-z_{i-1}|\leq s_i^{1/\alpha}$ and $|z'_i-z'_{i-1}|\leq s_i^{1/\alpha}$.  Now using Proposition \ref{lower}, we can conclude that $p_D(s_i, z_{i-1},\,z_i)\geq s_i^{-d/\alpha}$ and $p_D(s_i, z_{i-1}',\,z_i')\geq s_i^{-d/\alpha}$.  Moreover, we have $|z_i-z_i'|\leq s_1^{1/\alpha}$, which gives us $f(z_i, z_i')\geq s_1^{-\beta/\alpha}$.  In other words, we are looking at the points $\{s_i,\,z_i,\,z_i' \}_{i=0}^k$ such that the following holds
\begin{equation*}
\prod_{i=1}^kp_D(s_i, z_{i-1},\,z_i)p_D(s_i, z'_{i-1},\,z'_i)f(z_i, z_i')\geq \prod_{i=1}^k\frac{1}{s_i^{2d/\alpha}s_1^{\beta/\alpha}}.
\end{equation*}
Note that we have $|B(x,\,s_1^{1/\alpha}/2)\cap B(z_{i-1}, s_i^{1/\alpha})|\geq c|s_i|^{d/\alpha}$  and $|B(x,\,s_1^{1/\alpha}/2)\cap B(z'_{i-1}, s_i^{1/\alpha})|\geq c|s_i|^{d/\alpha}$ for some constant $c$, independent of $i$.   For notational convenience we set $\sA_i:=\{z_i\in B(x,\,s_1^{1/\alpha}/2)\cap B(z_{i-1}, s_i^{1/\alpha})\}$ and $\sA_i':=\{z_i'\in B(x,\,s_1^{1/\alpha}/2)\cap B(z'_{i-1}, s_i^{1/\alpha})\}$ which lead us to
\begin{equation*}
\begin{aligned}
\int_0^{t/k}&\int_{\sA_1}\int_{\sA'_1}\int_0^{t/k}\int_{\sA_2}\int_{\sA'_2}\cdots \int_0^{t/k}\int_{\sA_k}\int_{\sA'_k} \\
&\prod_{i=1}^kp_D(s_i, z_{i-1},\,z_i)p_D(s_i, z'_{i-1},\,z'_i)f(z_i, z_i') \d z_i\d z_i'\d s_i\\
&\geq \int_0^{t/k}\int_{\sA_1}\int_{\sA'_1}\int_0^{t/k}\int_{\sA_2}\int_{\sA'_2}\cdots \int_0^{t/k}\int_{\sA_k}\int_{\sA'_k} \\
&\prod_{i=1}^k\frac{1}{s_i^{2d/\alpha}s_1^{\beta/\alpha}}\d z_i\d z_i'\d s_i.
\end{aligned}
\end{equation*}

We now use the lower bounds on the area of $\sA'_i$s and $\sA_i$s to estimate the spatial integrals and then evaluate the time integrals to end up with the following lower bound on the above quantity
\begin{equation*}
\begin{aligned}
 c^{2k}\int_0^{t/k}& s_1^{k-1}\frac{1}{s_1^{k\beta/\alpha}}\d s_1\\
&=\frac{c^{2k}}{2^{\alpha(k-1)}}\frac{\alpha}{k(\alpha-\beta)}\left(\frac{t}{k} \right)^{k(\alpha-\beta)/\alpha}.
\end{aligned}
\end{equation*}
Putting the above estimates together we obtain

\begin{equation*}
\begin{aligned}
\E|u_t(&x)|^2\\
&\geq g_t^2+g_t^2\sum_{k=1}^\infty (\lambda l_\sigma)^{2k}\frac{c^{2k}}{2^{\alpha(k-1)}}\frac{\alpha}{k(\alpha-\beta)}\left(\frac{t}{k} \right)^{k(\alpha-\beta)/\alpha}\\
&\geq g_t^2+g_t^2\sum_{k=1}^\infty (\lambda l_\sigma c_1)^{2k}\left(\frac{t}{k} \right)^{k(\alpha-\beta)/\alpha},
\end{aligned}
\end{equation*}
for some constant $c_1$.  
\end{proof}

Recall that

\begin{equation*}
\sI_{\ve, t}(\lambda):=\inf_{x\in B(0,\,R-\ve)}\E|u_t(x)|^2,
\end{equation*}
where here  $u_t$ is the solution to \eqref{eq:dirichlet:colored}. We now have

\begin{proposition}
For any fixed $\ve>0$, there exists a $t_0>0$ such that for all $t\leq t_0$,
\begin{equation*}
\liminf_{\lambda\rightarrow \infty}\frac{\log \log \sI_{\ve, t}(\lambda)}{\log \lambda}\geq \frac{2\alpha}{\alpha-\beta}.
\end{equation*}
\end{proposition}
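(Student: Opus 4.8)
The plan is to feed the series lower bound of Proposition~\ref{prop:recur} directly into the asymptotic estimate for $S$ supplied by Lemma~\ref{sum}, so that essentially no new analysis is needed beyond careful bookkeeping of the constants. First I would apply Proposition~\ref{prop:recur} with $\ve$ replaced by $\ve/2$; this yields, for every $x\in B(0,\,R-\ve)$ and every $t\le t_0$,
\[
\E|u_t(x)|^2\geq g_t^2+g_t^2\sum_{k=1}^\infty (\lambda l_\sigma c_1)^{2k}\Bigl(\frac{t}{k}\Bigr)^{k(\alpha-\beta)/\alpha}.
\]
Since the right-hand side is independent of $x$, taking the infimum over $x\in B(0,\,R-\ve)$ produces the same lower bound for $\sI_{\ve,\,t}(\lambda)$. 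Here $g_t^2>0$ by Remark~\ref{deterministic} and $c_1>0$ is the constant from Proposition~\ref{prop:recur}, both fixed once $\ve$ and $t$ are fixed.

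Next I would recognise this series as an instance of the function $S$ from \eqref{sum}. Put $\rho:=(\alpha-\beta)/\alpha$, which is strictly positive because $\beta<\alpha$. Rewriting the general term,
\[
(\lambda l_\sigma c_1)^{2k}\Bigl(\frac{t}{k}\Bigr)^{k\rho}
=\frac{\bigl[(l_\sigma c_1)^2\,t^{\rho}\,\lambda^2\bigr]^{k}}{k^{k\rho}},
\]
shows that the series equals $S(\theta\tilde t)$ in the notation of Lemma~\ref{sum}, with exponent parameter $\rho$, with $\tilde t:=(l_\sigma c_1)^2 t^{\rho}$ a strictly positive constant (recall that $t$ is held fixed), and with $\theta:=\lambda^2$. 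Lemma~\ref{sum} then gives, using $\log\theta=2\log\lambda$,
\[
\liminf_{\lambda\to\infty}\frac{\log\log S(\lambda^2\tilde t)}{\log\lambda}
=2\liminf_{\theta\to\infty}\frac{\log\log S(\theta\tilde t)}{\log\theta}
\geq \frac{2}{\rho}=\frac{2\alpha}{\alpha-\beta}.
\]

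Finally, since $\sI_{\ve,\,t}(\lambda)\geq g_t^2\bigl(1+S(\lambda^2\tilde t)\bigr)\geq g_t^2\,S(\lambda^2\tilde t)$ and $S(\lambda^2\tilde t)\to\infty$ as $\lambda\to\infty$, the fixed multiplicative constant $g_t^2$ and the additive constant both disappear under the double logarithm; concretely $\log\log\bigl(g_t^2 S(\lambda^2\tilde t)\bigr)=\log\log S(\lambda^2\tilde t)+o(1)$ in the sense that dividing by $\log\lambda$ leaves the limit unchanged. Combined with the monotonicity of $\log\log$, this gives
\[
\liminf_{\lambda\to\infty}\frac{\log\log \sI_{\ve,\,t}(\lambda)}{\log\lambda}\geq \frac{2\alpha}{\alpha-\beta},
\]
as claimed. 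I do not expect a serious obstacle here, since the substantive work is already carried out in Proposition~\ref{prop:recur}; the only points requiring genuine care are (i) verifying that the constants $g_t^2$ and $\tilde t$ truly wash out in the double-logarithmic limit, and (ii) the exponent bookkeeping, where the factor $2$ coming from $\theta=\lambda^2$ must be tracked so that one obtains $2\alpha/(\alpha-\beta)$ rather than $\alpha/(\alpha-\beta)$.
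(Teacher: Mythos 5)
Your proposal is correct and follows essentially the same route as the paper: feed the series bound of Proposition~\ref{prop:recur} into Lemma~\ref{sum} with $\rho:=(\alpha-\beta)/\alpha$ and $\theta:=\lambda^2$, noting that the constants $g_t^2$ and $(l_\sigma c_1)^2 t^{\rho}$ wash out under the double logarithm. Your added care about the domain (invoking Proposition~\ref{prop:recur} with $\ve/2$ so the bound covers all of $B(0,\,R-\ve)$) and the explicit tracking of the factor $2$ from $\theta=\lambda^2$ are details the paper leaves implicit, but they do not change the argument.
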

\begin{proof}
We begin by writing

\begin{equation*}
\begin{aligned}
\sum_{k=1}^\infty (\lambda l_\sigma c_1)^{2k}&\left(\frac{t}{k} \right)^{k(\alpha-\beta)/\alpha}\\
&=\sum_{k=1}^\infty \left(\frac{(\lambda l_\sigma c_1)^2t^{(\alpha-\beta)/\alpha}}{k^{(\alpha-\beta)/\alpha}} \right)^{k}.
\end{aligned}
\end{equation*}
Lemma \ref{sum} with $\rho:=(\alpha-\beta)/\alpha$ and $\theta:=\lambda^2$ together with the above proposition finishes the proof.
\end{proof}
{\it Proof of Theorem \ref{coloured}:}
The above two propositions prove the theorem for all $t\leq t_0$. We now extend the result to all $t>0$. As in the proof of Theorem \ref{white}, we only need to extend the above proposition to any fixed $t>0$. For any $T,\,t>0$,
\begin{equation*}
\begin{aligned}
\E|u_{T+t}&(x)|^2\\
&\geq|(\sG_Du)_{t+T}(x)|^2+\lambda^2 l_\sigma^2\int_0^{T+t}\int_{B^2}\\
& p_D(T+t-s_1,\,x,\,z_1)p_D(T+t-s_1,\,x,\,z_1')\E|u_{s_1}(z_1)u_{s_1}(z_1')|f(z_1,z_1')\d z_1\d z_1'\d s_1.
\end{aligned}
\end{equation*}
 This leads to
\begin{equation*}
\begin{aligned}
\E|u_{T+t}&(x)|^2\\
&\geq|(\sG_Du)_{t+T}(x)|^2+\lambda^2 l_\sigma^2\int_0^{t}\int_{B^2}\\
& p_D(t-s_1,\,x,\,z_1)p_D(t-s_1,\,x,\,z_1')\E|u_{T+s_1}(z_1)u_{T+s_1}(z_1')|f(z_1,z_1')\d z_1\d z_1'\d s_1.
\end{aligned}
\end{equation*}
A similar argument to that used in the proof of Proposition \ref{prop:recur} shows that
\begin{equation*}
\begin{aligned}
\E|u_{T+t}&(x)|^2\\
&\geq|(\sG_Du)_{T+t}(x)|^2\\
&+ \sum_{k=1}^\infty (\lambda l_\sigma)^{2k}\int_0^t\int_{B^2}\int_0^{s_1}\int_{B^2}\cdots \int_0^{s_{k-1}}\int_{B^2} |(\sG_Du)_{T+s_k}(z_k)(\sG_Du)_{T+s_k}(z_k')|\\
&\prod_{i=1}^kp_D(s_{i-1}-s_{i}, z_{i-1},\,z_i)p_D(s_{i-1}-s_{i}, z'_{i-1},\,z'_i)f(z_i, z_i') \d z_i\d z_i'\d s_i.
\end{aligned}
\end{equation*}
Similar ideas to those used in the rest of the proof of Proposition \ref{prop:recur} together with the proof of the above proposition show that for all $t\leq t_0$, we have
\begin{equation*}
\liminf_{\lambda\rightarrow \infty}\frac{\log \log \E|u_{T+t}(x)|^2}{\log \lambda}\geq \frac{2\alpha}{\alpha-\beta}.
\end{equation*}
for all $T>0$ and whenever $x\in B(0,\,R-\ve)$.

\qed

{\it Proof of Corollary \ref{cor:coloured}:} The proof is exactly the same as that of Corollary \ref{cor:white} and is omitted.\qed

\section{Some extensions.}

We begin this section by showing that the methods developed in this can be used to study the stochastic wave equation as well. More precisely, we give an alternative proof of a very interesting result proved in \cite{Khoshnevisan:2013ab}. Consider the following equation 
\begin{equation}\label{wave}
\partial_{tt} u_t(x)=\partial_{xx} u_t(x)+\lambda \sigma(u_t(x))\dot{w}(t,\,x) \quad\text{for}\quad x\in \R \quad t>0,
\end{equation}
with initial condition $u_0(x)=0$ and non-random initial velocity $v_0$ satisfying $v_0\in L^1(\R)\cap L^2(\R)$ and $\|v_0\|_{L^2(\R)}>0$. As before $\sigma$ satisfies the conditions mentioned in the introduction. We set $\sE_t(\lambda):=\sqrt{\int_{-\infty}^\infty\E|u_t(x)|^2\,\d x}$ and restate the result of \cite{Khoshnevisan:2013ab} as follows,
\begin{theorem}
Fix $t>0$, we then have
\begin{equation*}
\lim_{\lambda\rightarrow \infty}\frac{\log \log \sE_t(\lambda)}{\log \lambda}=1
\end{equation*}

\end{theorem}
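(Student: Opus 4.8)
The plan is to reduce the statement to the two renewal inequalities of Section~2, exactly as in the proofs of Theorems~\ref{white} and \ref{coloured}; the essential simplification is that for the wave equation the fundamental solution is explicit, so no Dirichlet heat kernel estimates are needed. Writing $G(t,x)=\tfrac12\1_{\{|x|<t\}}$ for the one--dimensional wave kernel, the mild solution satisfies
\[
u_t(x)=(G_t*v_0)(x)+\lambda\int_0^t\int_\R G(t-s,x-y)\sigma(u_s(y))\,w(\d s\,\d y),
\]
where $(G_t*v_0)(x)=\tfrac12\int_{x-t}^{x+t}v_0(y)\,\d y$ solves the free wave equation with the given data. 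Applying Walsh's isometry, integrating the second moment over $x\in\R$, and setting $M_t:=\sE_t(\lambda)^2$ and $D_t:=\|G_t*v_0\|_{L^2(\R)}^2$, the key point is that $G^2(t-s,x-y)=\tfrac14\1_{\{|x-y|<t-s\}}$ integrates in $x$ over $\R$ to $(t-s)/2$, so by Tonelli
\[
M_t=D_t+\frac{\lambda^2}{2}\int_0^t(t-s)\Big(\int_\R\E|\sigma(u_s(y))|^2\,\d y\Big)\d s.
\]
Thus integrating over the whole line collapses the kernel to the weight $(t-s)^{\beta-1}$ with $\beta=2$, which is precisely the form handled by Propositions~\ref{prop:upper-renew} and \ref{prop:lower-renew}, now with $\kappa$ proportional to $\lambda^2$. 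Finiteness of $M_t$ is guaranteed by the existence theory.

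For the upper bound I would use $\E|\sigma(u_s(y))|^2\le L_\sigma^2\E|u_s(y)|^2$ to get
\[
M_t\le c_1+\frac{\lambda^2L_\sigma^2}{2}\int_0^t(t-s)M_s\,\d s,\qquad c_1:=\sup_{s\le t}D_s<\infty,
\]
and apply Proposition~\ref{prop:upper-renew} with $\beta=2$. Since there $\kappa=\lambda^2L_\sigma^2/2$, so that $\log\kappa=2\log\lambda+O(1)$, the conclusion $\limsup_{\kappa}\tfrac{\log\log M_t}{\log\kappa}\le\tfrac12$ becomes $\limsup_{\lambda}\tfrac{\log\log M_t}{\log\lambda}\le1$; because $\sE_t=\sqrt{M_t}$ only changes $\log\log M_t$ by the additive constant $\log\tfrac12$, this factor is unaffected in the limit.

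The lower bound requires the only genuine care. Using $\E|\sigma(u_s(y))|^2\ge l_\sigma^2\E|u_s(y)|^2$ gives $M_t\ge D_t+\tfrac{\lambda^2l_\sigma^2}{2}\int_0^t(t-s)M_s\,\d s$, but $D_t\to0$ as $t\to0$ (indeed $D_t\sim t^2\|v_0\|_{L^2}^2$), so one cannot feed a fixed positive free term straight into Proposition~\ref{prop:lower-renew}. I would resolve this exactly as in the extension step of Theorem~\ref{white}: fix the target time $t>0$, write it as $T+t'$ with $T=t'=t/2$, split the time integral at $T$, and substitute $s\mapsto T+s$. Setting $\tilde M_r:=M_{T+r}$ this yields, for every $r\in[0,t']$,
\[
\tilde M_r\ge D_{T+r}+\frac{\lambda^2l_\sigma^2}{2}\int_0^r(r-s)\tilde M_s\,\d s\ge c_2+\frac{\lambda^2l_\sigma^2}{2}\int_0^r(r-s)\tilde M_s\,\d s,
\]
where $c_2:=\inf_{s\in[T,t]}D_s$. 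This $c_2$ is genuinely positive: up to a fixed constant $D_s$ equals $\int_\R\frac{\sin^2(s\xi)}{\xi^2}|\hat v_0(\xi)|^2\,\d\xi$, which is continuous on the compact set $[T,t]\subset(0,\infty)$ and, since $v_0\not\equiv0$, has a positive integrand on a set of positive measure for each $s>0$. Hence $\tilde M$ satisfies the hypothesis of Proposition~\ref{prop:lower-renew} on $[0,t']$ with $\beta=2$ and $\kappa=\lambda^2l_\sigma^2/2$, giving $\liminf_{\lambda}\tfrac{\log\log M_t}{\log\lambda}\ge1$.

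Combining the two estimates and passing from $M_t$ to $\sE_t=\sqrt{M_t}$ delivers the claimed limit. The main obstacle is exactly the positivity and uniformity of the free constant $c_2$ in the lower estimate --- equivalently, that $\|G_s*v_0\|_{L^2}>0$ stays bounded away from $0$ for $s$ away from the origin --- which the shift and the Fourier representation settle; everything else is a direct transcription of the renewal machinery, with the reassuring consistency check that $E_2(z)=\cosh\sqrt z$, matching the exponent $1/\beta=\tfrac12$ together with the $\lambda^2$ scaling.
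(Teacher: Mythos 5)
Your proposal is correct, and its skeleton is the same as the paper's: Walsh's mild formulation, the isometry, integration over $x\in\R$ to collapse the wave kernel to the weight $(t-s)$, and then the renewal Propositions \ref{prop:upper-renew} and \ref{prop:lower-renew} with $\beta=2$ and $\kappa\propto\lambda^2$. Where you genuinely depart from the paper is the lower bound, and there your version is the more careful one. The paper asserts, ``using similar ideas'', the inequality $\sE_t^2(\lambda)\ge t^2\|v_0\|_{L^2(\R)}+\tfrac14\lambda^2L_\sigma^2\int_0^t(t-s)\sE_s^2(\lambda)\,\d s$ and feeds it directly into Proposition \ref{prop:lower-renew}. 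This is doubly problematic: first, by Plancherel the deterministic term equals a constant times $\int_\R|\hat v_0(\xi)|^2\sin^2(t\xi)\xi^{-2}\,\d\xi$, which is bounded \emph{above} by $c\,t^2\|v_0\|^2_{L^2(\R)}$, so a lower bound of that form cannot hold for general $v_0$; second, even granting a positive free term that vanishes as $t\to0$, Proposition \ref{prop:lower-renew} requires a \emph{constant} positive free term on all of $[0,T]$, so it does not apply verbatim. Your time-shift $t=T+t'$ (the same device the paper uses to extend Theorem \ref{white} beyond small times), combined with the observation that $\inf_{s\in[T,t]}D_s>0$ by continuity and Plancherel, supplies exactly the constant $c_2$ the renewal proposition needs; the remaining bookkeeping (between $\kappa=c\lambda^2$ and $\lambda$, and between $M_t$ and $\sE_t=\sqrt{M_t}$) is right. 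In short: same route as the paper, but your handling of the lower bound closes a real gap in the paper's own write-up.
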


\begin{proof}
We again use the theory of Walsh \cite{walsh} to make sense of \eqref{wave} as the solution to the following integral equation

\begin{equation*}
u_t(x)=\frac{1}{2}\int_{-t}^tv_0(x-y)\,\d y+\frac{1}{2}\lambda \int_0^t\int_{\R}1_{[0,t-s]}(|x-y|)\sigma(u_s(y))W(\d s\d y).
\end{equation*} 

We now use Walsh's isometry to obtain 

\begin{equation*}
\begin{aligned}
\E|u_t(x)|^2&=\frac{1}{4}\left| \int_{-t}^tv_0(x-y)\,\d y\right|^2\\
&+\frac{1}{4}\lambda^2\int_0^t\int_{\R}1_{[0,t-s]}(|x-y|)\E |\sigma(u_s(y)|^2\,\d s\,\d y.
\end{aligned}
\end{equation*}

Recall that from the assumption on the initial velocity, we have 
\begin{equation*}
\int_\R\left| \int_{-t}^tv_0(x-y)\,\d y\right|^2\,\d x\leq 4t^2\|v_0\|^2_{L^2(\R)}.
\end{equation*}

This and the assumption on $\sigma$ yields 

\begin{equation}\label{upper-wave}
\sE^2_t(\lambda)\leq 4t^2\|v_0\|_{L^2(\R)}+\frac{1}{4}\lambda^2L^2_\sigma\int_0^t(t-s)\sE^2_s(\lambda)\,\d s.
\end{equation}

Using similar ideas we can obtain the following lower bound,
\begin{equation}\label{lower-wave}
\sE^2_t(\lambda)\geq t^2\|v_0\|_{L^2(\R)}+\frac{1}{4}\lambda^2L^2_\sigma\int_0^t(t-s)\sE^2_s(\lambda)\,\d s.
\end{equation}
We now use Propositions \ref{prop:upper-renew} and \ref{prop:lower-renew} together with the above two inequalities to obtain the result.
\end{proof}

The method developed so far can be adapted to the study of a much wider class of stochastic heat equations, once we have the ``right" heat kernel estimates. Indeed, \eqref{heat:upper} and \eqref{heat:lower} were two crucial elements of our method. So by considering operators whose heat kernels behave in a nice way, we can generate examples of stochastic heat equations for which, we can apply our method. Recall that we are considering equations of the type,
\begin{equation}\label{general-dirichlet}
\partial_t u_t(x)=\sL u_t(x)+\lambda \sigma(u_t(x))\dot{F}(t,\,x).\\
\end{equation}

In what follows, we will choose different $\sL$s while keeping all the other conditions as before. And again, the choice of these operators $\sL$s will make the boundary conditions clear.  Some of the equations below appear to be new.  We again to not prove existence-uniqueness results as these are fairly standard once we have a grip on the heat kernel.  See \cite{walsh} and \cite{minicourse}.

\begin{example}
We choose $\sL$ to be the generator of a Brownian motion defined on the interval $(0,1)$ which is reflected at the point $1$ and killed at the other end of the interval. So, we are in fact looking at 
\begin{equation*}
\left|\begin{split}
&\partial_t u_t(x)=\frac{1}{2}\partial _{xx}u_t(x)+\lambda \sigma(u_t(x))\dot{F}(t,\,x)\quad\text{for}\quad0<x<1\quad\text{and}\quad t>0\\
&u_t(0)=0,\quad \partial_{x}u_t(1)=1 \quad \text{for}\quad t>0.
\end{split}
\right.
\end{equation*}
It can be shown that for any $\ve>0$, there exists a $t_0>0$, such that for all $x\in [\epsilon, 1)$ and $t\leq t_0$, the heat kernel of this Brownian motion satisfies 
\begin{equation*}
p(t,\,x,\,y)\asymp t^{-d/2},
\end{equation*}
whenever $|x-y|\leq t^{1/2}$. We use the method developed in this paper to conclude that
\begin{equation*}
\lim_{\lambda\rightarrow \infty}\frac{\log \log \E|u_t(x)|^2}{\log \lambda}=\frac{4}{2-\beta},
\end{equation*}
whenever $x\in [\ve,1)$.
\end{example}

\begin{example}
Let $X_t$ be censored stable process as introduced in \cite{BoBurChen}.  These have been studied in \cite{Chen-Kim-Song}.  Roughly speaking, the censored stable process in the ball $B(0,\,R)$ can be obtained by suppressing the jump from $B(0,\,R)$ to the complement of $B(0,\,R)^c$.  The process is thus forced to stay inside $B(0,\,R)$. We denote the generator of this process by $-(-\Delta)^{\alpha/2}|_{B(0,\,R)}$ and consider the following equation 
\begin{equation}\label{regional}
\partial_t u_t(x)=-(-\Delta)^{\alpha/2}|_{B(0,\,R)} u_t(x)+\lambda \sigma(u_t(x))\dot{F}(t,\,x),\\
\end{equation}
In a sense, the above the above equation can be regarded as fractional equation with Neumann boundary condition.  In \cite{Chen-Kim-Song}, it was shown that the probability density function of $X_t$, which we denote by $\bar{p}(t,\,x,\,y)$ satisfies
\begin{equation*}
\bar{p}(t,\,x,\,y)\asymp\left(1\wedge \frac{\delta^{\alpha/2}_{B(0,\,R)}(x)}{t^{1/2}}\right)\left(1\wedge \frac{\delta^{\alpha/2}_{B(0,\,R)}(y)}{t^{1/2}}\right)p(t,\,x,\,y),
\end{equation*}
So we can proceed as in the proof of Theorem \ref{coloured} to see that we have 
\begin{equation}\label{conclusion}
\lim_{\lambda\rightarrow \infty}\frac{\log \log \E|u_t(x)|^2}{\log \lambda}=\frac{2\alpha}{\alpha-\beta}.
\end{equation}
\end{example}

\begin{example}
In this example, we choose $\sL$ be the generator of the relativistic stable process killed upon exiting the ball $B(0,\,R)$. We are therefore looking at the following equation 
\begin{equation*}
\left|\begin{split}
&\partial_t u_t(x)= mu_t(x)-(m^{2/\alpha}-\Delta)^{\alpha/2}u_t(x)+\lambda \sigma(u_t(x))\dot{F}(t,\,x),\\
&u_t(x)=0, \quad \text{for all}\quad x\in B(0,\,R)^c.
\end{split}
\right.
\end{equation*}
Here $m$ is some fixed positive number. One can show that for any $\ve>0$, there exists a $t_0>0$, such that for all $x,y\in B(0,\,R-\ve)$ and $t\leq t_0$, we have
\begin{equation*}
p(t,\,x,\,y)\asymp t^{-d/\alpha},
\end{equation*}
whenever $|x-y|\leq t^{1/\alpha}$. See for instance \cite{Chen-Kim-Song2}.  The constants involved in the above inequality depends on $m$.
We therefore have the same conclusion as that of Theorem \ref{coloured}. In other words, we have 
\begin{equation}\label{conclusion}
\lim_{\lambda\rightarrow \infty}\frac{\log \log \E|u_t(x)|^2}{\log \lambda}=\frac{2\alpha}{\alpha-\beta},
\end{equation}
whenever $x\in B(0,\R-\ve)$.
\end{example}

\begin{example}
Let $1<\bar{\alpha}<\alpha<2$ and consider the following 
\begin{equation*}
\left|\begin{split}
&\partial_t u_t(x)= -(-\Delta)^{\alpha/2}u_t(x)-(-\Delta)^{\bar{\alpha}/2}u_t(x)+\lambda \sigma(u_t(x))\dot{F}(t,\,x),\\
&u_t(x)=0, \quad \text{for all}\quad x\in B(0,\,R)^c.
\end{split}
\right.
\end{equation*}
The Dirichlet heat kernel for the operator $\sL:=-(-\Delta)^{\alpha/2}-(-\Delta)^{\bar{\alpha}/2}$ has been studied in \cite{Chen-Kim-Song3}. Since $\bar{\alpha}\leq \alpha$, it is known that for small times, the behaviour of the heat kernel estimates is dominated by the fractional Laplacian $-(-\Delta)^{\alpha/2}$. More precisely, for any $\ve>0$, there exists a $t_0>0$, such that for all $x,y\in B(0,\,R-\ve)$ and $t\leq t_0$, we have
\begin{equation*}
p(t,\,x,\,y)\asymp t^{-d/\alpha},
\end{equation*}
whenever $|x-y|\leq t^{1/\alpha}$.  Therefore, in this case also, we have \eqref{conclusion}.
\end{example}



\bibliography{FoonKhosh}
\end{document}